\documentclass{article}
\usepackage{amsmath}
 \usepackage{amsfonts}
\usepackage{amsthm}
\usepackage{bbm}
\usepackage{hyperref}
\theoremstyle{plain}

\numberwithin{equation}{section}

\newtheorem{thm}{Theorem}[section]
\newtheorem{remark}[thm]{Remark}
\newtheorem{prop}[thm]{Proposition}

\newtheorem{lem}[thm]{Lemma}
\newtheorem{example}[thm]{Example}
\begin{document}
\noindent

\title{An exponential kernel associated with operators that have one-dimensional self-commutators}
\author{Kevin F. Clancey}
\date{August 2018}
\maketitle
\begin{abstract} The exponential kernel \[E{g}(\lambda,w) = \exp -\frac{1}{\pi}\int_{\mathbb{C} }
\frac{g(u)}{\overline{u-w} (u-\lambda) }  da(u ),\] where the compactly supported bounded measurable function $g$ satisfies $0 \leq g\leq 1,$ and suitably defined for all complex $\lambda, w,$ plays a role in the theory of Hilbert space operators with one-dimensional self-commutators and in the theory of quadrature domains. This article studies continuity and integral representation properties of $E_{g}$ with further applications of this exponential kernel to operators with one-dimensional self-commutator.
\end{abstract}

\section{Introduction}For $g$ a compactly supported bounded measurable function defined on the complex plane $\mathbb{C}$ that satisfies $0\leq g\leq1,$ let $E_{g}(\lambda,w)=E(\lambda,w)$ be defined by 
\begin{equation}\begin{gathered}\label{key} E(\lambda,w) = \exp -\frac{1}{\pi}\int_{\mathbb{C} }
\frac{u-w}{u-\lambda }
\frac{g(u)}{\vert u-w\vert^2} da(u )= \\ \exp -\frac{1}{\pi}\int_{\mathbb{C} }
\frac{g(u)}{\overline{u-w} (u-\lambda) }  da(u )\end{gathered}\end{equation}
for $\lambda\neq w,$ with $E(w,w)$ defined to be $0$ if $\frac{1}{\pi}\int_{\mathbb{C}}g(u)\vert u-w\vert^{-2} = \infty$ and equal to \[\exp-\frac{1}{\pi}\int_{\mathbb{C}} \frac{g(u)}{\vert u-w\vert^2} da(u )\] when \begin{equation}\label{finite}\frac{1}{\pi}\int_{\mathbb{C}} \frac{g(u)}{\vert u-w\vert^2} da(u )<\infty .\end{equation}  Here $a$ denotes area measure.  

The function $E_g$ first appeared in the study of bounded linear operators on a Hilbert space with one-dimensional self-commutator. To show this connection, let $T$ be a bounded linear operator on a Hilbert space $\mathcal H$ satisfying $T^{*}T-TT^{*}= \varphi\otimes\varphi.$ Throughout the following it will be assumed that $T$ is irreducible, which in this case is equivalent to the statement that there are no non-zero subspaces of $\mathcal H$ reducing $T$ where $T$ restricts to a normal operators. Pincus \cite{PNAS} established that there is a one-to-one correspondence between the unitary equivalence classes of the collection of such operators with the collection of equivalence classes of compactly supported Lebesgue measurable functions $g$ satisfying $0\leq g\leq 1.$ The (equivalence class of the) function $g_T$ associated with $T$ is called the principal function of $T.$ The principal function $g$ (the subscript $T$ will usually not be included on the principal function) first appeared in \cite{pincus1} in the study of spectral theory of self-adjoint singular integral operators on the real line. To continue the story of the connection of $E_{g_{_{T}}}$ with $T,$ we introduce the local resolvent. It develops that for $\lambda$ in $\mathbb{C}$ there is a unique solution of the equation $T_{\lambda}^{*}x=\varphi$ orthogonal to the kernel of $T_{\lambda}^{*} = (T-\lambda)^{*}.$ This solution is denoted $T_{\lambda}^{*-1}\varphi.$  The $\mathcal H$-valued function $T_{\lambda}^{*-1}\varphi$  defined for $\lambda \in \mathbb{C}$ was first investigated by Putnam \cite{Putnam} and Radjabalipour  \cite{Radj} and will be called the global-local resolvent associated with the operator $T.$ The following result from \cite{Clancey84} shows the connection between the function $E_{g_{T}}$ and the $\mathcal H$-valued function $T_{\lambda}^{*-1}\varphi$ .

\begin{thm} Let $T$ be an irreducible operator with one-dimensional self-commutator $T^{*}T-TT^{*}= \varphi\otimes\varphi.$  Let $g=g_{T}$ be the associated principal function and $T_{\lambda}^{*-1}\varphi, \lambda\in \mathbb{C}$  the associated global-local-resolvent. Then for $\lambda$ and $w$ in $\mathbb{C}$  \begin{equation} 1- (T_{w}^{*-1}\varphi,T_{\lambda}^{*-1}\varphi )= E_{g}(\lambda, w ) =  \exp-\frac{1}{\pi}{\int_{\mathbb{C}} \frac{u-w}{u-\lambda }\frac{g(u)}{\vert u-w\vert^2}} da(u ).\label{KFC}\end{equation}\end{thm}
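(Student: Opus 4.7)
The plan is to prove (\ref{KFC}) first on the open set $\Omega=(\mathbb{C}\setminus\sigma(T))^{2}$, where both sides are smooth in $(\lambda,w)$, holomorphic in $\lambda$ and antiholomorphic in $w$, and then to extend to all of $\mathbb{C}^{2}$ by continuity. A direct estimate shows that both sides tend to $1$ as $|\lambda|\to\infty$ or $|w|\to\infty$: on the left one uses $T_{\lambda}^{*-1}\varphi=O(|\lambda|^{-1})$, and on the right one uses the compact support of $g$ together with the obvious $O(|\lambda|^{-1})$ bound on the integrand in the exponent. It therefore suffices to match $\partial_{\lambda}\log$ of the two sides on $\Omega$.

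Differentiating under the integral sign gives
\[
\partial_{\lambda}\log E_{g}(\lambda,w)=-\frac{1}{\pi}\int_{\mathbb{C}}\frac{g(u)}{\overline{u-w}\,(u-\lambda)^{2}}\,da(u).
\]
Set $F(\lambda,w)=1-(T_{w}^{*-1}\varphi,T_{\lambda}^{*-1}\varphi)$. Using the rank-one identity $T^{*}T-TT^{*}=\varphi\otimes\varphi$ and cyclicity of the trace,
\[
1-F(\lambda,w)=\mathrm{tr}\bigl((T-\lambda)^{-1}(T-w)^{*-1}\,\varphi\otimes\varphi\bigr)=\bigl((T-\lambda)^{-1}(T-w)^{*-1}\varphi,\varphi\bigr),
\]
and the resolvent identity yields
\[
\partial_{\lambda}F(\lambda,w)=-\bigl((T-\lambda)^{-2}(T-w)^{*-1}\varphi,\varphi\bigr).
\]
To convert this rank-one inner product into an integral against $g$ one invokes the Pincus/Helton--Howe trace identity $\mathrm{tr}[p(T,T^{*}),q(T,T^{*})]=\frac{1}{\pi}\int\{p,q\}\,g\,da$, which is essentially the defining property of the principal function; specialized to $p,q$ built from the two resolvents it yields the differential equation
\[
\partial_{\lambda}F(\lambda,w)=F(\lambda,w)\cdot\partial_{\lambda}\log E_{g}(\lambda,w).
\]

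Combined with the common limit $1$ at infinity, this equation forces $F=E_{g}$ on $\Omega$. Continuity of $E_{g}$ (from the analysis developed later in this paper) and of $\lambda\mapsto T_{\lambda}^{*-1}\varphi$ (from \cite{Putnam,Radj}), together with the prescription of $E_{g}(w,w)$ via (\ref{finite}), extend the equality to all of $\mathbb{C}^{2}$. The main obstacle is the identification of the operator trace with the $g$-integral in the middle paragraph: this is the deep input from Pincus' theory and is the technical core of \cite{Clancey84}. Everything else -- resolvent analyticity, behavior at infinity, and joint continuity -- is routine.
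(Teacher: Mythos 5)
The paper does not actually prove this theorem: it is quoted from \cite{Clancey84}, with the remark that only the case $\vert\lambda\vert,\vert w\vert>\Vert T\Vert$ follows from the earlier Pincus theory in \cite{pincus}. So there is no in-paper argument to match your proposal against, and it must be judged on its own. Your first stage --- establishing the identity on $(\mathbb{C}\setminus\sigma(T))^{2}$ --- is essentially the classical route: for $\lambda,w$ in the resolvent set one has $1-(T_{w}^{*-1}\varphi,T_{\lambda}^{*-1}\varphi)=\det\bigl(T_{\lambda}T_{w}^{*}T_{\lambda}^{-1}T_{w}^{*-1}\bigr)$ because the multiplicative commutator is $I$ plus a rank-one operator, and the Pincus determinant/trace machinery identifies this with $E_{g}(\lambda,w)$. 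Your write-up of that step is looser than it should be (the Helton--Howe formula produces trace-of-commutator quantities, which naturally give $\partial_{\lambda}\partial_{\bar w}\log$ of the determinant rather than the first-order equation $\partial_{\lambda}F=F\,\partial_{\lambda}\log E_{g}$ you assert), but you correctly flag this as the deep external input, and with the determinant formulation it can be made rigorous on the resolvent set.

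The genuine gap is the final sentence: ``extend to all of $\mathbb{C}^{2}$ by continuity.'' This fails because $\mathbb{C}\setminus\sigma(T)$ is in general \emph{not dense} in $\mathbb{C}$: for an irreducible $T$ with principal function $g$, the spectrum is the essential closure of $\{u:g(u)\neq0\}$, a set of positive area, and it typically has nonempty interior. For the unilateral shift, $\sigma(T)=\overline{\mathbf{D}}$ and your set $\Omega$ is the product of two copies of the exterior of the closed disc; continuity from $\Omega$ says nothing about $\lambda$ or $w$ inside $\mathbf{D}$. The paper's own formula (\ref{unitdisc}) makes the obstruction concrete: $E_{\mathbf{D}}(\lambda,w)$ equals $1-(\overline{w}\lambda)^{-1}$ for $\lambda,w\notin\mathbf{D}$ but $\vert\lambda-w\vert^{2}/(1-\overline{w}\lambda)$ for $\lambda,w\in\mathbf{D}$; these are not analytic continuations of one another across the boundary, so no limiting argument from outside can produce the interior values. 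Proving (\ref{KFC}) for $\lambda$ and $w$ inside the spectrum --- where $T_{\lambda}^{*-1}\varphi$ is only the distinguished solution of $T_{\lambda}^{*}x=\varphi$, not a genuine resolvent, and where neither side is holomorphic --- is exactly the content of \cite{Clancey84} and is the part your argument leaves untouched.
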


For $\vert\lambda\vert$ and $\vert w\vert$ larger than $\Vert T\Vert,$ the result in the above theorem follows from early work on the principal function as presented in \cite{pincus}. The identity (\ref{KFC}) has consequences for both the operator $T$ and the function $E.$ For example, it follows easily from the weak continuity of the global-local resolvent and (\ref{KFC}) that the function $E$ is separately continuous. However, the fact that $E$ is  separately continuous without assuming the identity is far from obvious (see, \cite[p. 260]{putinarmartin}).  In descending chronological order, expository accounts of the relevant operator theory can be found in \cite{putinarmartin}, \cite{peller}, \cite{xia}, \cite{semiKFC}, and \cite{putnam1}. We also refer to these sources for historical accounts and many of the references to the area. 

In an unexpected direction, when the function $g$ is the characteristic function of a planar domain $\Omega,$ Putinar \cite{Putinar} made connections between the exponential kernel $E_{g},$ quadrature domains, and operators with one dimensional self-commutator. A recent account of these connections, including a discussion of properties of $E,$ and citations of earlier work can be found in the book \cite{bookgusput}. 

The main focus here is on the continuity and integral representation properties of $E_{g}.$ This is the content of the first part of this paper. We will study the continuity properties function $E_{g}$ without any reference to the associated operator $T.$ In the second part, we will offer some comments about the operator $T$ that can be gleaned from the identity (\ref{KFC}) and results in the first part.

\section{Continuity properties of E} In this section we will establish the sectional continuity of the function $E_{g}$ and show this function is locally Lipschitz at points of positive density of the measure $gda.$ A study of Cauchy transform representations of $E_{g}$ will also be presented. It should be remarked that the sectional continuity of $\vert E_{g}\vert$ was established in \cite{Clancey84} using methods similar to but less exact than those employed here. It will be assumed that $w$ is fixed and $E_g(\lambda, w)$ will be considered as a function of $\lambda.$  Unless stated otherwise, the function $g$ will be assumed to satisfy $0\leq g\leq1.$ 

\subsection{Sectional continuity} Fix the point $w.$ For $\lambda\neq w,$ the continuity of the function \begin{equation}f_{w}(\lambda) = -\frac{1}{\pi}\int_{\mathbb{C}}\frac{g(u)}{(\overline{u-w})(u-\lambda)}da(u) \label{logE}\end{equation} can be established by elementary means.  In particular, this follows from basic properties of the Cauchy transform that will be introduced below. Thus for $w$ fixed, a study of the sectional continuity of $E_{g}(\lambda, w)$ reduces to investigating continuity at $w.$ This will accomplished by first studying the continuity of (\ref{logE}) in the case $\frac{1}{\pi}\int_{\mathbb{C}}\frac{g(u)}{\vert u-w\vert^{2}}da(u) < \infty.$ 

We first derive some integral formulas for the case where the function $g$ in (\ref{logE}) is the characteristic function of well-chosen discs. 

The linear fractional mapping \[T_{w,\lambda}(u)=\frac{u-w}{u-\lambda}\] has invariant properties relative to the measure $\frac{1}{\vert u-w\vert ^2}da(u)$. This property can be used to compute the real and imaginary parts of the integral \[ \frac{-1}{\pi}\int_{D} \frac{u-w}{u - \lambda}\vert u-w\vert^{-2} da(u)\] over specific discs.

For $\alpha \in\bf R$ let $D_{\lambda,\alpha}$ be the disc with center $c_{\alpha}=\frac{w+\lambda +\alpha (\lambda - w)}{2}$ of radius $r_{\alpha}=\frac{\vert (\lambda-w) (1-\alpha)\vert}{2}$. For $\alpha < 1$,  \[D_{\lambda, \alpha}=\{u:Re\left[\frac{u-w}{u-\lambda} \right] < \frac{\alpha}{\alpha -1}\}\]
and for $\alpha >1$ \[D_{\lambda, \alpha}=\{u:Re\left[\frac{u-w}{u-\lambda} \right]  > \frac{\alpha}{\alpha -1}\}.\] We note that for $N>1$ \[D_{\lambda, \frac{N}{N+1}}=\{u:Re\left[\frac{u-w}{u-\lambda} \right] < -N\}\]
and \[D_{\lambda, \frac{N}{N-1}}=\{u:Re\left[\frac{u-w}{u-\lambda} \right]  > N\}.\]

For $0\leq\alpha$, a direct computation using a change of variables and polar coordinates shows \[-\frac{1}{\pi}\int_{D_{\lambda,\alpha}} Re\frac{u-w}{u-\lambda }\frac{1}{\vert u-w\vert^2} da(u ) = \frac{-1}{\pi}\int_{0}^{\frac{\pi}{2}} \log(\alpha^2\cos^2\theta + \sin^2\theta)d\theta =\ln\frac{2}{1+\alpha},\] and for $\alpha <0$

\begin{equation}\label{real}-\frac{1}{\pi}\int_{D_{\lambda,\alpha}} Re\frac{u-w}{u-\lambda }\frac{1}{\vert u-w\vert^2} da(u ) =\ln\frac{2}{1+\vert\alpha\vert}.\end{equation}

In a similar manner, for $\beta\neq 0$ in $\bf R,$ let $\Delta_{\lambda,\beta}$ be the disc of radius $r_{\beta} = \vert\frac{\beta (\lambda - w)}{2}\vert$ centered at $c_{\beta} =\lambda +\frac{i(\lambda-w)\beta}{2}.$

For $\beta >0,$\[\Delta_{\lambda,\beta}=\{u:Im\frac{u-w}{u-\lambda} < -\frac{1}{\beta}\},\] and for $\beta <0$

\[\Delta_{\lambda,\beta}=\{u:Im\frac{u-w}{u-\lambda} > -\frac{1}{\beta}\}.\] 

Another polar-coordinates computation shows that for $\beta\neq 0$

\begin{equation}\label{Imaginary}\begin{gathered}-\frac{1}{\pi}\int_{\Delta_{\lambda,\beta}} Im\frac{u-w}{u-\lambda }\frac{1}{\vert u-w\vert^2} da(u ) =\\ \frac{1}{\pi}\int_{0}^{\pi} \arctan (\beta +\cot\theta) d\theta =\arctan\{\frac{\beta}{2}\}.\end{gathered}\end{equation}

At the end of this paper it will be shown that the identities (\ref{real}) and (\ref{Imaginary}) are closely aligned and derivable from (\ref{KFC}) in the case where the operator $T$ is the unilateral shift. 

One consequence of the identities (\ref{real}) and (\ref{Imaginary}) is the following. Given $\varepsilon >0,$ there is an $M=M(\varepsilon)$ independent of $\lambda$ and $w$ such that for $N>M$ \begin{equation}\label{RE} \frac{1}{\pi}\int_{\left| Re\frac{u-w}{u-\lambda }\right| >N} \left| Re\frac{u-w}{u-\lambda }\right|\frac{1}{\vert u-w\vert^2} da(u ) < \varepsilon\end{equation}

and
\begin{equation}\label{IM}\frac{1}{\pi}\int_{\vert Im\frac{u-w}{u-\lambda }\vert>N} \left| Im\frac{u-w}{u-\lambda }\right|\frac{1}{\vert u-w\vert^2} da(u ) < \varepsilon .\end{equation} For technical reasons, it is required that $M>1.$

Using the above, one can directly establish the sectional continuity of the function $E.$ 
\begin{thm} \label{continuity} For $w$ fixed in $\mathbb{C}$ the function $E_{g}(\cdot, w)$ is continuous on $\mathbb{C}.$ \end{thm}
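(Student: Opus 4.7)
Since continuity of $E_g(\cdot,w)$ at points $\lambda\neq w$ has already been indicated via Cauchy transform properties, only continuity at $\lambda=w$ requires work. I distinguish two cases according to whether
\[I(w):=\tfrac{1}{\pi}\int_{\mathbb{C}} g(u)\,|u-w|^{-2}\,da(u)\]
is finite or infinite.

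\emph{Finite case.} Here $E_g(w,w)=e^{-I(w)}$, so I would show $f_w(\lambda)\to f_w(w)=-I(w)$. Using the identity $\tfrac{u-w}{u-\lambda}-1=\tfrac{\lambda-w}{u-\lambda}$, the difference $f_w(\lambda)-f_w(w)$ is a single integral that I would bound by handling real and imaginary parts separately. Given $\varepsilon>0$, choose $N>M(\varepsilon)$ from \eqref{RE} and \eqref{IM} so that the integrals over $S_N=\{|\mathrm{Re}\tfrac{u-w}{u-\lambda}|>N\}$ and $T_N=\{|\mathrm{Im}\tfrac{u-w}{u-\lambda}|>N\}$ are each less than $\varepsilon$, uniformly in $\lambda$; since $N>1$ this also controls $\int_{S_N}g/|u-w|^2\,da$ and its imaginary analogue. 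On the complementary ``good'' region, $|\tfrac{u-w}{u-\lambda}|\leq \sqrt{2}\,N$, so $|\tfrac{u-w}{u-\lambda}-1|\leq N+1$. I would then split this good region by $|u-w|\gtrless \delta$ for small $\delta>0$: on $|u-w|>\delta$, for $|\lambda-w|<\delta/2$ one has $|\tfrac{\lambda-w}{u-\lambda}|\leq 2|\lambda-w|/\delta$, giving uniform convergence of the integrand to zero; on $|u-w|<\delta$, the contribution is dominated by $(N+1)\int_{|u-w|<\delta} g/|u-w|^2\,da$, which is $<\varepsilon$ for $\delta$ small, by absolute continuity of the finite measure $g|u-w|^{-2}\,da$.

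\emph{Infinite case.} Now $E_g(w,w)=0$, so it suffices to show $\mathrm{Re}\,f_w(\lambda)\to-\infty$. I would split the integral for $-\pi\,\mathrm{Re}\,f_w(\lambda)$ according to the sign of $\mathrm{Re}\tfrac{u-w}{u-\lambda}$. The negative region is precisely the disc $D_{\lambda,0}$ of diameter $\overline{w\lambda}$, and \eqref{real} with $\alpha=0$ gives $\int_{D_{\lambda,0}}\mathrm{Re}\tfrac{u-w}{u-\lambda}\cdot|u-w|^{-2}\,da=-\pi\ln 2$; since $0\leq g\leq 1$, the negative part is bounded below by $-\pi\ln 2$, uniformly in $\lambda$. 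On the complementary region the integrand is non-negative, and as $\lambda\to w$ the excluded disc shrinks to $\{w\}$ while $\mathrm{Re}\tfrac{u-w}{u-\lambda}\to 1$ pointwise. Fatou's lemma then yields $\liminf\int\geq \pi I(w)=+\infty$. Combining, $\mathrm{Re}\,f_w(\lambda)\to-\infty$, hence $|E_g(\lambda,w)|=\exp\mathrm{Re}\,f_w(\lambda)\to 0=E_g(w,w)$.

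The main obstacle is the finite case: both the integrand and the ``bad'' set depend on $\lambda$, so ordinary dominated convergence does not apply directly. The uniformity of \eqref{RE} and \eqref{IM} in $\lambda$ and $w$, combined with absolute continuity of $g|u-w|^{-2}\,da$ when $I(w)<\infty$, is what makes the two-step truncation (first in the size of $|\mathrm{Re}\tfrac{u-w}{u-\lambda}|$ or $|\mathrm{Im}\tfrac{u-w}{u-\lambda}|$, then in $|u-w|$) go through.
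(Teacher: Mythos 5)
Your proposal is correct and follows essentially the same route as the paper: in the finite case you use the identical two-step truncation (first on the size of $\mathrm{Re}\,\frac{u-w}{u-\lambda}$ and $\mathrm{Im}\,\frac{u-w}{u-\lambda}$ via the uniform bounds (\ref{RE})--(\ref{IM}), then on $|u-w|$ via absolute continuity of $g|u-w|^{-2}da$), and in the infinite case you isolate the disc $D_{\lambda,0}$ where the integrand is negative and bound its contribution by $\ln 2$ using (\ref{real}) with $\alpha=0$, exactly as the paper does. The only cosmetic differences are your use of Fatou's lemma on the complement of $D_{\lambda,0}$ where the paper instead uses $\mathrm{Re}\,\frac{u-w}{u-\lambda}\geq\frac{1}{2}$ off $D_{\lambda,-1}$ together with monotone convergence, and your use of the observation $N>1$ to control $\int_{S_N}g|u-w|^{-2}da$ where the paper uses the containment of the exceptional sets in a small disc about $w$.
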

\begin{proof}
Case 1.   $\frac{1}{\pi}\int_{\mathbb{C}} \frac{g(u)}{\vert u-w\vert^2} da(u )<\infty .$ 

Let $\varepsilon >0.$ Let $N >M$ be fixed so that the inequalities (\ref{RE}) and (\ref{IM}) hold for all $\lambda\neq w.$ Choose $\delta_{0} >0$ such that for $0<\delta\leq\delta_{0}$ one has

 \begin{equation}\label{bound}\frac{1}{\pi}\int_{\vert u-w\vert <\delta} \frac{g(u)}{\vert u-w\vert^2} da(u )<\frac{\varepsilon}{N} .\end{equation} 
 There exists a $\delta_1< \delta_0$ such that for $\vert \lambda -w\vert <\delta_1,$ one has \[ \bigl\lvert\frac{u-w}{u-\lambda} -1\bigl\rvert < \varepsilon\  \text{for}\  \vert u-w\vert \geq \delta_0.\] 
 
 Note for $\lambda$ sufficiently close to $w,$ say for $\vert \lambda -w \vert <\delta_{2},$ the set \[U_N^{\lambda}=\{u:\vert Re\frac{u-w}{u-\lambda }\vert >N\} = D_{\lambda ,\frac{N}{N-1}}\cup D_{\lambda ,\frac{N}{N+1}}\] and the set \[V_N^{\lambda}= \{u:\vert Im\frac{u-w}{u-\lambda }\vert >N\} =\Delta_{\lambda, 1/ N} \cup\Delta_{\lambda, -1/ N}\] will be in the disc $\{ u: \vert u-w\vert <\delta_{1}\}.$ For $\vert \lambda -w\vert <\delta_2,$ we estimate \[D(\lambda ):=\left|\frac{1}{\pi}\int_{\mathbb{C}} \frac{u-w}{u-\lambda}\frac{g(u)}{\vert u-w\vert ^2} da(u) - \frac{1}{\pi}\int_{\mathbb{C}}\frac{g(u)}{\vert u-w\vert ^2} da(u)\right|\] separately over the sets 
 \[A = \{ u:\vert u-w\vert \geq \delta_1\}, B = \{u: \vert u-w\vert < \delta_1\}\backslash (U_N^{\lambda}\cup V_{N}^{\lambda}), \text{and}\ C= U_N^{\lambda}\cup V_{N}^{\lambda}.\] 

 \par On $A$ we have the estimate 
 
 \[\begin{aligned} \left|\frac{1}{\pi}\int_{A} \frac{u-w}{u-\lambda}\frac{g(u)}{\vert u-w\vert ^2} da(u) - \frac{1}{\pi}\int_{A}\frac{g(u)}{\vert u-w\vert ^2} da(u)\right| \leq \\ \frac{1}{\pi}\int_{A} \left|\frac{u-w}{u-\lambda}-1 \right|\frac{g(u)}{\vert u-w\vert ^2} da(u) \leq \frac{\varepsilon}{\pi} \int_{\mathbb{C}} \frac{g(u)}{\vert u-w\vert^2}da(u) . \end{aligned} \] 
 
 \par On $B$ \[\begin{gathered} \left|\frac{1}{\pi}\int_{B} \frac{u-w}{u-\lambda}\frac{g(u)}{\vert u-w\vert ^2} da(u) - \frac{1}{\pi}\int_{B}\frac{g(u)}{\vert u-w\vert ^2} da(u)\right| \leq \\ \frac{1}{\pi}\int_{B}\left|\frac{u-w}{u-\lambda}\right|\frac{g(u)}{\vert u-w\vert^2}da(u) + \frac{1}{\pi}\int_{B}\frac{g(u)}{\vert u-w\vert^2}da(u) \leq\\ \frac{1}{\pi}\int_{B}\left| Re \left[\frac{u-w}{u-\lambda}\right]\right|\frac{g(u)} {\vert u-w\vert^2}da(u) + \frac{1}{\pi}\int_{B}\left| Im \left[\frac{u-w}{u-\lambda}\right]\right|
 \frac{g(u)}{\vert u-w\vert^2} da(u) +\frac{\varepsilon}{N} \leq \\ 2N\frac{1}{\pi}\int_{B}\frac{g(u)}{\vert u-w\vert ^2} da(u) +\frac{\varepsilon}{N} \leq 3\varepsilon. \end{gathered}\]
 \par On $C$
 
\[\begin{gathered} \left|\frac{1}{\pi}\int_{C} \frac{u-w}{u-\lambda}\frac{g(u)}{\vert u-w\vert ^2} da(u) - \frac{1}{\pi}\int_{C}\frac{g(u)}{\vert u-w\vert ^2} da(u)\right| \leq \\ \frac{1}{\pi}\int_{C}\left| Re\left[\frac{u-w}{u-\lambda} \right] \right| \frac{g(u)}{\vert u-w\vert ^2} da(u)  + \frac{1}{\pi}\int_{C}\left| Im\left[\frac{u-w}{u-\lambda}\right]\right| \frac{g(u)}{\vert u-w\vert ^2} da(u)  + \frac{1}{\pi}\int_{C}\frac{g(u)}{\vert u-w\vert ^2} da(u).\end{gathered}\]  The last integral is less than the corresponding integral over $\{u:\vert u -w\vert < \delta_1\}$ and consequently less than $\varepsilon.$ The first of the two integrals on the right side of this last inequality can be estimated as follows:

\[\begin{gathered} \frac{1}{\pi}\int_{C}\left| Re\left[\frac{u-w}{u-\lambda} \right] \right| \frac{g(u)}{\vert u-w\vert ^2} da(u) = \\  \frac{1}{\pi}\int_{U_N^{\lambda}}\left| Re\left[\frac{u-w}{u-\lambda} \right] \right|\frac{g(u)}{\vert u-w\vert ^2} da(u) +  \frac{1}{\pi}\int_{V_N^{\lambda}\backslash U_N^{\lambda}}\left| Re\left[\frac{u-w}{u-\lambda} \right] \right| \frac{g(u)}{\vert u-w\vert ^2} da(u).\end{gathered}\] The first of these last two integrals is less than $\varepsilon$ and using the fact that $\left|Re\left[\frac{u-w}{u-\lambda} \right] \right|$ is less than $N$ off $U_N^{\lambda},$ it follows from equation (\ref{bound}) that the second integral is also less than $\varepsilon.$
 A similar argument shows \[\frac{1}{\pi}\int_{C}\left| Im\left[\frac{u-w}{u-\lambda}\right]\right| \frac{g(u)}{\vert u-w\vert ^2} da(u) < 2\varepsilon.\]  It follows from the above discussion that for $\vert \lambda -w\vert <\delta_{2}$ \[ D(\lambda) < \left( 7  + \frac{1}{\pi}\int_{\mathbb{C}} \frac{g(u)}{\vert u-w\vert^2} da(u)\right)\varepsilon .\] This completes the proof in Case 1.
 
 Case 2.   $\frac{1}{\pi}\int_{\mathbb{C}} \frac{g(u)}{\vert u-w\vert^2} da(u )=\infty .$ This case is easier then the first case and was established in \cite{Clancey84}. The result follows once it is shown that $\lim_{\lambda\to w} \vert E(\lambda, w)\vert = 0.$ For completeness, we include the details. The notation $D(w,r)$ will be used for the disc in $\mathbb{C}$ centered at $w$ of radius $r.$ By the monotone convergence theorem \[\lim_{\lambda\to w}\frac{1}{\pi}\int_{D(w, \vert\lambda - w\vert )} \frac{g(u)}{\vert u-w\vert^2} da(u) = \infty.\] In the notation introduced above, the disc $D(w, \vert\lambda - w\vert )$ coincides with $D_{\lambda, -1},$ which can be written as the disjoint union $(D_{\lambda, -1}\backslash D_{\lambda, 0})\cup D_{\lambda, 0}.$ Then \[\begin{gathered}\vert E(\lambda, w)\vert = \exp -\frac{1}{\pi}\int_{\mathbb{C}} Re\left[\frac{u-w}{u-\lambda }\right]\frac{g(u)}{\vert u-w\vert^2} da(u )\leq \\
\left(\exp -\frac{1}{\pi}\int_{\mathbb{C}\backslash D_{\lambda, -1}} Re\left[\frac{u-w}{u-\lambda}\right]\frac{g(u)}{\vert u-w\vert^2} da(u )\right)\left(\exp -\frac{1}{\pi}\int_{D_{\lambda, 0}} Re\left[\frac{u-w}{u-\lambda}\right]\frac{g(u)}{\vert u-w\vert^2} da(u )\right)\leq\\ 2 \exp -\frac{1}{2\pi}\int_{\mathbb{C}\backslash D( w,\vert \lambda -w\vert)} \frac{g(u)}{\vert u-w\vert^2} da(u ).\end{gathered}\] Here we used the facts that $Re\left[\frac{u-w}{u-\lambda}\right]$ is greater than $\frac{1}{2}$ on $\mathbb{C}\backslash D( w,\vert \lambda -w\vert)$ and non-negative on $D_{\lambda, -1}\backslash D_{\lambda, 0}$ as well as the result that \[-\frac{1}{\pi}\int_{D_{\lambda, 0}} Re\left[\frac{u-w}{u-\lambda}\right]\frac{1}{\vert u-w\vert^2} da(u) =\ln 2.\] As noted above, the desired result now follows from the monotone convergence theorem.

 \end{proof} \begin{remark}Assuming, as is the case here, that $0\leq g\leq 1,$ one consequence of the last integral inequality is the inequality \[\vert E_{g}(\lambda, w) \vert\leq 2, \ \text{for all}\ \lambda, w \]  with equality holding if and only if $g$ is the characteristic function of a disc and where $\lambda, w$ are antipodal boundary points.\end{remark}
 \begin{remark} \label{forrep} With $w$ fixed, with minor modifications, the proof of Case 1 in Theorem \ref{continuity} establishes the continuity of the integral in (\ref{logE}) as a function of $\lambda$ for any compactly supported bounded measurable function $g$  under the assumption \[\frac{1}{\pi}\int_{\mathbb{C}} \frac{|g(u)|}{\vert u-w\vert^2} da(u )<\infty .\]\end{remark}
 
\subsection{Local Lipschitz continuity} The function $E_{g}$ is Lipschitz at almost every point in the support of $g.$ To establish this we will use the following elementary lemmas.
 
 \begin{lem} Let $h=h(x)$ be continuous on the interval $[0,R]$ with $h(0)=0$ and $R>0$ is fixed.  For $t\in (0,R]$ define \[H(t) = \int_{t}^{R}\frac{h(x)}{x}dx.\] Given $0< \varepsilon $ there exists a $\delta >0$ and constant $K=K(\varepsilon)$ such that for $0<t<\delta$ one has the estimate \[\vert H(t)\vert\leq K -\varepsilon \ln t\]\end{lem}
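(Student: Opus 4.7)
The plan is a direct $\varepsilon$--$\delta$ splitting argument using only the continuity of $h$ at $0$. Since $h$ is continuous on $[0,R]$ with $h(0)=0$, given $\varepsilon>0$ I can choose $\delta\in(0,R]$ such that $|h(x)|\leq\varepsilon$ whenever $0\leq x\leq\delta$. For $0<t<\delta$, I split
\[
H(t)=\int_{t}^{\delta}\frac{h(x)}{x}\,dx+\int_{\delta}^{R}\frac{h(x)}{x}\,dx.
\]

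The second integral is a fixed constant depending only on $h$, $R$, and the chosen $\delta=\delta(\varepsilon)$; denote its absolute value by $K_{1}=K_{1}(\varepsilon)$. The first integral is controlled using the uniform bound on $|h|$ near $0$:
\[
\left|\int_{t}^{\delta}\frac{h(x)}{x}\,dx\right|\leq \varepsilon\int_{t}^{\delta}\frac{dx}{x}=\varepsilon\bigl(\ln\delta-\ln t\bigr).
\]

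Combining the two estimates yields $|H(t)|\leq K_{1}+\varepsilon\ln\delta-\varepsilon\ln t$. I then set $K=K_{1}+\varepsilon\ln\delta$ (replacing with $\max(K,0)$ if necessary, since $-\varepsilon\ln t>0$ once $t<1$ makes the right-hand side positive automatically for small $t$). This $K$ depends on $\varepsilon$ through $\delta$ but not on $t$, so for $0<t<\delta$ one has $|H(t)|\leq K-\varepsilon\ln t$, which is the required estimate.

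There is no serious obstacle here: the content of the lemma is exactly that the only way $H(t)$ can grow as $t\to 0^{+}$ is logarithmically, with a rate constant that can be made arbitrarily small at the price of a larger additive constant. This is the usual trade-off exploited whenever one wants to absorb a logarithmic singularity, and the proof amounts to cutting off the region where $h$ is not yet small and estimating by $\int dx/x$ on the remainder.
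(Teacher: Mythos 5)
Your proof is correct and is essentially identical to the paper's: both split the integral at a $\delta$ chosen so that $|h|\leq\varepsilon$ on $[0,\delta]$, bound the near-origin piece by $\varepsilon(\ln\delta-\ln t)$, and absorb the fixed tail $\int_{\delta}^{R}$ into the constant $K(\varepsilon)$ (the paper merely makes that tail explicit as $M\ln R - M\ln\delta$ with $M=\max|h|$). The parenthetical about replacing $K$ by $\max(K,0)$ is unnecessary, since the lemma does not require $K$ to be positive, but it does no harm.
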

 
 \begin{proof} Let $\varepsilon >0$ and find $\delta >0$ such that $0<t<\delta$ implies $\vert h(t)\vert <\varepsilon.$ Then for $0<t<\delta$ \[\begin{aligned}\vert H(t)\vert \leq \int_{t}^{\delta}\vert h(x)\vert\frac{dx}{x} + \int_{\delta}^{R}\vert h(x)\vert\frac{dx}{x}\leq\varepsilon\int_{t}^{\delta}\frac{dx}{x}+M\ln R -M\ln\delta =\\M\ln R +(\varepsilon-M)\ln \delta - \varepsilon\ln t ,\end{aligned}\] where $M$ is the maximum of $\vert h\vert$ on $[0,R].$ The result follows with $K(\varepsilon ) = M\ln R +(\varepsilon - M)\ln\delta.$ \end{proof}
 
We continue to assume that $g$ is a measurable function with compact support satisfying $0\leq g\leq 1$ and introduce the notation $\bold{L}_{g}$ for the set of points of positive Lebesgue density of $g.$ Thus $\bold{L}_{g}$  is the set of points $w$ that satisfy \[ \lim_{R\to 0}\frac{1}{\pi r^2} \int_{D(w, r)} g(u) da(u) = \gamma >0.\]
 
 \begin{lem}\label{Lip} Let $g=g(u)$ be a bounded non-negative measurable function defined in a neighborhood of $D(0,R)$ of where $0\in\bold{L}_{g},$ that is, \[\lim_{r\to 0}\frac{1}{\pi r^2}\int_{D(0,r)}g(u)da(u) =\gamma  >0. \] Given $\varepsilon >0$ there is a $\delta =\delta (\varepsilon )$ and a constant $K = K(\varepsilon )$ such that with $0<t<\delta$ one has the estimate \[\frac{1}{2\pi}\int_{D(0,R)\backslash D(0,t)}\frac{g(u)}{\vert u\vert^2}da(u) < K  - (\gamma -\varepsilon )\ln t.\] \end{lem}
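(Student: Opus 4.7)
The plan is to pass to polar coordinates around the origin and reduce the estimate to the preceding one-dimensional lemma. Introducing the circular average $\mu(r)=\frac{1}{2\pi}\int_0^{2\pi} g(re^{i\theta})\,d\theta$, Fubini gives
\[
\frac{1}{2\pi}\int_{D(0,R)\setminus D(0,t)}\frac{g(u)}{|u|^2}\,da(u)=\int_t^R \frac{\mu(r)}{r}\,dr,
\]
so the task becomes controlling this one-dimensional integral using only the density hypothesis at the origin.

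To connect $\mu$ to the density, I would introduce the cumulative function $\Phi(r):=\int_{D(0,r)} g\,da$, whose normalization $G(r):=\Phi(r)/(\pi r^2)$ is precisely the averaged density that tends to $\gamma$ as $r\to 0^{+}$. Since $g$ is bounded and measurable, $\Phi$ is absolutely continuous with $\Phi'(r)=2\pi r\mu(r)$ a.e., and integration by parts on $[t,R]$ yields
\[
\int_t^R \frac{\mu(r)}{r}\,dr = \frac{G(R)-G(t)}{2} + \int_t^R \frac{G(r)}{r}\,dr,
\]
reducing the problem to an integral against the continuous function $G$.

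The final step is the decomposition $G(r)=\gamma+h(r)$, where $h(r):=G(r)-\gamma$ is continuous on $(0,R]$ and, by the density hypothesis, extends continuously to $[0,R]$ with $h(0)=0$. Then
\[
\int_t^R \frac{G(r)}{r}\,dr = \gamma\ln(R/t) + \int_t^R \frac{h(r)}{r}\,dr,
\]
and the preceding lemma applied to $h$ supplies $\delta>0$ and $K_0$ with the estimate $\bigl|\int_t^R h(r)/r\,dr\bigr|\leq K_0-\varepsilon\ln t$ for $0<t<\delta$. Combining these estimates and absorbing the bounded quantities $G(R)/2$, $\gamma\ln R$, and $K_0$ into one constant $K=K(\varepsilon)$ yields an inequality of the stated form on $(0,\delta)$.

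The main technical point is the integration-by-parts step, which relies on the a.e.\ identity $\Phi'(r)=2\pi r\mu(r)$ for the absolutely continuous $\Phi$; this is immediate from polar coordinates but is where the two-dimensional structure of the problem is fully used. Once past this step, the remaining work is entirely one-dimensional and is exactly the situation that the preceding lemma was designed to handle.
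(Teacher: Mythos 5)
Your proof is correct and is essentially the paper's own argument: the same polar-coordinate reduction to $\int_t^R G(s)s^{-1}\,ds$ with $G$ the circular average, the same Lebesgue--Stieltjes integration by parts producing $\tfrac{1}{2}\bigl(G(R)-G(t)\bigr)+\gamma\ln(R/t)+\int_t^R h(x)\,x^{-1}dx$ with $h(x)=\frac{1}{\pi x^2}\int_{D(0,x)}g\,da-\gamma$, and the same appeal to the preceding lemma for the $h$-term. The only caveat (shared with the paper) is that the final combination literally yields the upper bound $K-(\gamma+\varepsilon)\ln t$ together with the lower bound $-K-(\gamma-\varepsilon)\ln t$ --- the latter being what Theorem \ref{main} actually uses --- rather than the inequality exactly as printed in the statement.
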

\begin{proof} For $0\leq s\leq R,$ let  $G(s) = \frac{1}{2\pi}\int_{0}^{2\pi} g(se^{i\theta})d\theta$ and for $0<t\leq R$ set \[f(t)=\int_{t}^{R}\frac{G(s)}{s^2}sds.\] Note that \[f(t) =\frac{1}{2\pi}\int_{D(0,R)\backslash D(0,t)}\frac{g(u)}{\vert u\vert^2}da(u).\]Applying integration by parts for Lebesgue-Stieltjes integration, see, for example,  \cite{Integration}, to this first integral above for $f$  one obtains the identity
 \[\begin{gathered}f(t) = \frac{\int_{0}^{x}G(s)sds}{x^2}\bigg\rvert_{t}^{R} + \int_{t}^{R}\{\frac{2}{x^2}\int_{0}^{x}G(s)sds\}\frac{dx}{x}=\\\frac{1}{R^2}\int_{0}^{R}G(s)sds-\frac{1}{t^2}\int_{0}^{t}G(s)sds + \int_{t}^{R}\left[\frac{2}{x^2}\int_{0}^{x}G(s)sds-\gamma\right]\frac{dx}{x}+\gamma\ln R-\gamma\ln t\end{gathered}.\] We note that as $t\to 0$ the second term in this last expression approaches $\frac{1}{2}\gamma$ and as a consequence is bounded on $[0,R].$
 If one applies the preceding lemma to the third integral in the right side of this last identity with\[h(x)=\frac{2}{x^2}\int_{0}^{x}G(s)sds-\gamma\] the result follows.\end{proof}
  
 \begin{thm} \label{main} Let $w$ be in $\bold{L}_{g},$ that is, assume \[\lim_{\lambda\to w}\frac{1}{\pi\vert\lambda - w\vert ^2}\int_{D(w,\vert\lambda -w\vert)}g(u)da(u) =\gamma >0,\]  so that, $E(w,w)=0.$Then the function $E_{g}(\lambda , w)$ is Lipschitz at $w.$ More specifically, given $\varepsilon <\gamma$ there is a disc $D(w,\delta)$ with \begin{equation} |E(\lambda, w)|\leq K\vert\lambda -w\vert^{\gamma-\varepsilon}, \ \  \lambda\in D(w, \delta).\label{lipord}\end{equation}
 \end{thm}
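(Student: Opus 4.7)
By translation I would reduce to the case $w=0$, so that the task becomes exhibiting $|E_g(\lambda,0)|\le K|\lambda|^{\gamma-\varepsilon}$ for $\lambda$ small. Since
\[|E_g(\lambda,0)|=\exp\bigl(-I(\lambda)\bigr),\qquad I(\lambda):=\frac{1}{\pi}\int_{\mathbb C}\mathrm{Re}\frac{u}{u-\lambda}\,\frac{g(u)}{|u|^{2}}\,da(u),\]
this is equivalent to establishing the lower estimate $I(\lambda)\ge -(\gamma-\varepsilon)\ln|\lambda|-\ln K$ for all sufficiently small $|\lambda|$.

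To produce this lower bound I would replicate the three-piece dissection used in Case~2 of Theorem~\ref{continuity}, now extracting a quantitative estimate rather than merely qualitative decay. Split $\mathbb C$ into $A=\mathbb C\setminus D(0,|\lambda|)=\mathbb C\setminus D_{\lambda,-1}$, $B=D_{\lambda,-1}\setminus D_{\lambda,0}$, and $C=D_{\lambda,0}$. Using that $\mathrm{Re}[u/(u-\lambda)]\ge\tfrac12$ on $A$, is non-negative on $B$, and is non-positive on $C$, together with $0\le g\le 1$ and the explicit evaluation $-\frac{1}{\pi}\int_{D_{\lambda,0}}\mathrm{Re}[u/(u-\lambda)]|u|^{-2}\,da=\ln 2$ (the $\alpha=0$ specialization of (\ref{real})), I would arrive at
\[I(\lambda)\;\ge\;\frac{1}{2\pi}\int_{\mathbb C\setminus D(0,|\lambda|)}\frac{g(u)}{|u|^{2}}\,da(u)\;-\;\ln 2.\]

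Fixing $R$ with $\mathrm{supp}\,g\subset D(0,R)$ reduces the remaining integral to one over the annulus $D(0,R)\setminus D(0,|\lambda|)$, and Lemma~\ref{Lip} applied with $t=|\lambda|$ then delivers, for $|\lambda|<\delta(\varepsilon)$, a bound of the form $\frac{1}{2\pi}\int_{D(0,R)\setminus D(0,|\lambda|)}g(u)|u|^{-2}\,da(u)\ge -(\gamma-\varepsilon)\ln|\lambda|-K_\varepsilon$. Substituting and exponentiating would then yield the claimed Lipschitz bound with $K=\exp(K_\varepsilon+\ln 2)$.

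The main obstacle is bookkeeping rather than substance: the positive contribution from $A$ must dominate both the factor $\tfrac12$ lost in replacing $\mathrm{Re}[u/(u-\lambda)]$ by its uniform lower bound on $A$ and the bounded negative contribution from $C$. Note also that Lemma~\ref{Lip} is applied in the lower direction; its proof manufactures the estimate from the two-sided bound $|H(t)|\le K-\varepsilon\ln t$ of the preceding lemma, so the lower side is available at no extra cost. The essential conceptual input is the density hypothesis $0\in\mathbf{L}_g$, which through Lemma~\ref{Lip} converts the blow-up of the $|u|^{-2}$-weighted mass of $g$ near the origin into a logarithmic lower bound with coefficient as close to $\gamma$ as desired.
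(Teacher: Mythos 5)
Your proof is correct and follows essentially the same route as the paper: the same decomposition into $\mathbb{C}\setminus D_{\lambda,-1}$, $D_{\lambda,-1}\setminus D_{\lambda,0}$, and $D_{\lambda,0}$, the same use of the explicit evaluation $\ln 2$ over $D_{\lambda,0}$ to reach $|E(\lambda,w)|\le 2\exp\bigl(-\frac{1}{2\pi}\int_{\mathbb{C}\setminus D(w,|\lambda-w|)}g(u)|u-w|^{-2}\,da(u)\bigr)$, and the same appeal to Lemma \ref{Lip}. Your observation that Lemma \ref{Lip} must be invoked in the lower direction is apt: the lemma as stated records only the upper bound, but, as you note, its proof rests on the two-sided estimate $|H(t)|\le K-\varepsilon\ln t$ and therefore supplies the lower bound the theorem actually requires.
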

 \begin{proof}
 First note that the disc $D(w,\vert\lambda -w\vert)$ is precisely $D_{\lambda, -1}$ and the complement of this disc is precisely the set where $Re\left[\frac{u-w}{u-\lambda}\right]\geq\frac{1}{2}.$  As a consequence \begin{equation}\begin{gathered}\label{prelim}\vert E(\lambda, w)\vert = \exp -\frac{1}{\pi}\int_{\mathbb{C}} Re\frac{u-w}{u-\lambda }\frac{g(u)}{\vert u-w\vert^2} da(u )\leq \\  2\exp -\frac{1}{2\pi}\int_{\mathbb{C}\backslash D(w,\vert\lambda - w\vert )} \frac{g(u)}{\vert u-w\vert^2} da(u ).  \end{gathered} \end{equation} This last identity is obtained by writing the integral \[-\frac{1}{\pi}\int_{D(w,\vert\lambda -w\vert)} Re\frac{u-w}{u-\lambda }\frac{g(u)}{\vert u-w\vert^2} da(u)\] over $D(w,\vert\lambda -w\vert) = D_{\lambda, -1}$  as the sum of the integrals over $D_{\lambda, -1}\backslash D_{\lambda, 0}$ and $D_{\lambda, 0}.$ The first of these integral being negative contributes nothing to the last inequality. In the second integral one can replace $g$ by $1$  and use the identity
 \[-\frac{1}{\pi}\int_{D_{\lambda, 0}} Re\frac{u-w}{u-\lambda }\frac{1}{\vert u-w\vert^2} da(u ) =\ln 2\]  to obtain the above bound. The result then follows from (\ref{prelim}) and Proposition  \ref{Lip}.
 \end{proof}

\begin{example} A simple example to keep in mind is the case where $g$ is the characteristic function $\mathbbm{1}_{\bf{D}} $ of the unit disc $\bf{D}.$
In this case, we denote the function $E_{g}$ by $E_{\bf{D}}.$ One computes \begin{equation} \label{unitdisc}E_{\bf{D}}(\lambda, w) = \begin{cases} \frac{\vert\lambda - w\vert^2}{1-\overline{w}\lambda} & \text{if}\ \lambda, w\in\bf{D} \\ \\
\overline{\left[\frac{w-\lambda}{w}\right]} & \text{if}\ \lambda\in\bf{D}\ \text{and}\ w\notin\bf{D}\\ \\1-\frac{1}{\overline{w}\lambda} &\text{if}\ \lambda, w\notin\bf{D} .\end{cases}\end{equation}
\end{example}
We remark that this example suggests that, in general, the local Lipschitz order of $E_{g}(w,\cdot)$ at $w$ in (\ref{lipord}) should be $2(\gamma - \varepsilon).$  This is the case if $g$ is smooth in a neighborhood of $w.$

\subsection {Cauchy Transform Representations of $E_{g}$} Given a compactly supported measure $\mu$ on $\mathbb{C},$ the Cauchy Transform $\hat{\mu}$ is the locally integrable function defined for $a.e.$  $\lambda$ in $\mathbb{C}$ by \begin{equation}\label{ct} \hat{\mu} (\lambda) = \frac{1}{\pi}\int_{\mathbb{C}}\frac{d\mu (u)}{u-\lambda} .\end{equation} For a measure of the form $d\mu = fda,$ where $f$ is a compactly supported integrable function, the Cauchy transform will be denote $\hat{f}.$ A good place to read about the Cauchy Transform is the monograph of Garnett \cite{Garnett}. 
In the sense of distributions \[-\bar{\partial}\hat{\mu} = \mu. \] 
Formally, for $w$ fixed and $\lambda\neq w,$ one expects in the sense of distributions \begin{equation}\label{dbarE} \bar{\partial_{\lambda}} E_{g}(\lambda, w) =\frac{E_{g}(\lambda, w)}{\overline{\lambda - w}}g(\lambda) \end{equation} and, consequently, \begin{equation}\label{distr} E_{g}(\lambda, w) =1-\left(\frac{E_{g}(\lambda, w)}{\overline{\lambda - w}}g(\lambda )\right)^{\widehat{ }}.\end{equation} Here,  $hat$ denotes the distributional Cauchy transform, given for a distribution $S$  with compact support on the test function $\phi$ by \[<\phi, \hat{S} > = -<\hat{\phi},S>.\]  Taking into account behavior at infinity and Weyl's Lemma, for $w$ fixed and $\lambda\neq w,$ formally, one further expects \begin{equation}\label{keyidentity} E(\lambda, w) = 1-\frac{1}{\pi}\int_{\mathbb{C}}\frac{E(u,w)}{\overline{u-w}} g(u)\frac{da(u)}{u-\lambda}.\end{equation} All of the above distributional identities are subtle. Although, for $w$ fixed, the function $\frac{E_{g}(\lambda, w)}{\overline{\lambda - w}}g(\lambda)$ is integrable, it is unclear whether the identity (\ref{keyidentity}) holds for all $\lambda.$   The goal of this subsection is to establish circumstances where, for $w$ fixed, (\ref{keyidentity}) holds for all $\lambda.$ More specifically, it will be shown that this is the case when $w\in\bold{L}_{g}$ or when $\frac{1}{\pi}\int_{\mathbb{C}} \frac{g(u)}{\vert u-w\vert^2} da(u )<\infty .$ We will take a somewhat ad hoc path to the results.

We will first consider an easy case.  Let $\bold{G}$ denote the essential support of the function $g.$ Thus  $z$ is in $\bold{G}$ if and only if every neighborhood of $z$ intersects the set $\{u : g(u)\neq 0\}$ in a set of positive measure. For the case $w\notin \bold{G}$ one can give a direct  proof that (\ref{keyidentity}) holds for all $\lambda.$ 
To this end, recall that for $h$ and $k$ bounded measurable functions with compact support in $\mathbb{C}$ one has \[ \hat{h}\hat{k} = \widehat{\hat{h}k}+\widehat{h\hat{k}}\] see, for example, \cite[p.107]{Garnett}. We remark that fact that $h$ and $k$ are bounded with compact support implies that $\hat{h}$ and $\hat{k}$ are continuous.   As a consequence, for $h$ bounded with compact support and $N\geq 1$ one has \begin{equation}\label{power}\hat{h}^{N} = N\widehat{\hat{h}^{N-1} h}.\end{equation}.

\begin{prop} Let $w\notin\bold{G}.$ Then (\ref{keyidentity}) holds for all $\lambda$ in $\mathbb{C}.$\end{prop}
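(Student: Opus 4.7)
The plan is to exploit the hypothesis $w \notin \bold{G}$: since $\rho := \mathrm{dist}(w, \bold{G}) > 0$, the function
\[h(u) := \frac{g(u)}{\overline{u-w}}\]
is bounded (by $1/\rho$) and compactly supported in $\bold{G}$. Its Cauchy transform $\hat{h}$ is then continuous and bounded on all of $\mathbb{C}$, and since $f_w(\lambda) = -\hat{h}(\lambda)$, this gives the clean representation $E(\lambda, w) = \exp(-\hat{h}(\lambda))$, valid for every $\lambda \in \mathbb{C}$ (including $\lambda = w$, because the integral defining $E(w,w)$ is finite thanks to $|u-w| \geq \rho$ on $\mathrm{supp}\,g$).

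From here I would expand the exponential and apply formula (\ref{power}) termwise. The uniform bound $|\hat{h}| \leq C$ gives uniform convergence of
\[\exp(-\hat{h}(\lambda)) - 1 = \sum_{N=1}^{\infty} \frac{(-\hat{h}(\lambda))^N}{N!},\]
and for each $N \geq 1$ the identity $\hat{h}^N = N\,\widehat{\hat{h}^{N-1} h}$ yields
\[\frac{(-\hat{h})^N}{N!} = \frac{(-1)^N}{(N-1)!}\, \widehat{\hat{h}^{N-1} h}.\]
After the shift of index $M = N-1$, these Cauchy transforms assemble into a single transform of $\exp(-\hat{h}) \cdot h$, producing
\[E(\lambda, w) - 1 \;=\; -\widehat{\,E(\cdot, w)\, h\,}(\lambda) \;=\; -\frac{1}{\pi} \int_{\mathbb{C}} \frac{E(u,w)}{\overline{u-w}}\, g(u)\, \frac{da(u)}{u-\lambda},\]
which is exactly (\ref{keyidentity}).

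The one point requiring care is the interchange of the infinite sum with the Cauchy transform. I would handle it by noting that the partial sums $S_n(u) := \sum_{M=0}^{n} \tfrac{(-\hat{h}(u))^M}{M!}\, h(u)$ are uniformly bounded (by $e^C/\rho$) and supported in the fixed compact set $\bold{G}$, and converge uniformly to $\exp(-\hat{h}(u))\, h(u)$. For each fixed $\lambda$, local integrability of $1/|u-\lambda|$ over $\bold{G}$ then lets dominated convergence conclude $\widehat{S_n}(\lambda) \to \widehat{\exp(-\hat{h}) \cdot h}(\lambda)$ pointwise. The principal obstacle is therefore not really analytic but rather bookkeeping: shifting indices and tracking signs correctly in the termwise application of (\ref{power}); everything else reduces to observations about Cauchy transforms of bounded compactly supported functions that the paper has already put in place.
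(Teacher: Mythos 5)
Your proposal is correct and follows essentially the same route as the paper: both exploit that $w\notin\bold{G}$ makes $h(u)=g(u)/\overline{u-w}$ bounded with compact support, expand $E(\lambda,w)=\exp(-\hat{h}(\lambda))$ as a power series, apply the identity (\ref{power}) termwise, and justify the interchange of summation and Cauchy transform by uniform convergence of the series on $\bold{G}$ together with local integrability of $|u-\lambda|^{-1}$. Your slightly more explicit dominated-convergence bookkeeping for the interchange is a fair elaboration of the paper's one-line justification, not a different argument.
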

\begin{proof} Applying the identity (\ref{power}) with $h(u)=\frac{g(u)}{\overline{u-w}}$  one sees
\[ \begin{gathered}1- E(\lambda, w) = \sum_{N=1}^{\infty}\frac{ (-1)^{N-1} }{N!} 
\left( \frac{1}{\pi}\int_{\mathbb{C}}\frac{g(u)}{\overline{u-w}} \frac{da(u)}{u-\lambda}\right)^N = \\   \sum_{N=1}^{\infty}\frac{ (-1)^{N-1} }{(N-1)!} \frac{1}{\pi}\int_{\mathbb{C}}
\left( \frac{1}{\pi}\int_{\mathbb{C}}\frac{g(v)}{\overline{v-w}} \frac{da(v)}{v-u}\right)^{N-1} \frac{g(u)}{\overline{u-w}}\frac{da(u)}{u-\lambda} =\\ \frac{1}{\pi}\int_{\mathbb{C}}\frac{E(u,w)}{\overline{u-w}}g(u)\frac{da(u)}{u-\lambda},\end{gathered}\] where, for $\lambda$ fixed, the interchange of summation and integration to produce the last expression follows from the uniform convergence of the series for $E_{g} (u, w)$ in the variable $u$ on $\bold{G},$ which is the support of the finite measure $\frac{g(u)}{\overline{u-w}}\frac{da(u)}{u-\lambda}.$
\end{proof}

\begin{remark} The extent of the validity of (\ref{power}) for arbitrary planar integrable $h$ is unclear; however, when $N=2$ it does hold $a.e.$ when $\hat{h}$ is integrable with respect to $\vert h\vert da$ (see, \cite{Volberg}).  \end{remark}

We continue our study of the validity of (\ref{dbar})  and (\ref{distr}) by first deriving and analogue of  (\ref{power}) for functions of the form \begin{equation}\label{singh} h_{w}(u)=\frac{g(u)}{\overline{u-w}}.\end{equation}  

Without loss of generality, it is sufficient to consider this last identity when $w=0.$ We begin by noting that for $\lambda\neq 0$ \begin{equation}\label{power1}\int_{\mathbb{C}}\frac{g(u)}{\overline{u}(u-\lambda)} da(u)= \frac{1}{\lambda}\left[\int_{\mathbb{C}}\frac{ug(u)}{\overline{u}(u-\lambda)}da(u) - \int_{\mathbb{C}}\frac{g(u)}{\overline{u}}da(u)\right].\end{equation} We introduce the notations\[ h_{0}(u)=\frac{g(u)}{\overline{u}}\ \ k_{0}(u)=\frac{ug(u)}{\overline{u}}\ \ C=-\frac{1}{\pi}\int_{\mathbb{C}}\frac{g(u)}{\overline{u}}da(u).\] Thus equation (\ref{power1}) can be written in the compact form \[ \widehat{h_{0}}(\lambda) =\frac{1}{\lambda}\left[ \widehat{k_{0}}(\lambda) + C\right]\ \lambda\neq 0.\]  Note that \[C=-\widehat{k_{0}}(0). \]Since the function $k_{0}$ is bounded with compact support, we can apply (\ref{power}) to this function. Using a binomial expansion we see for $\lambda\neq 0,$

\begin{equation}\label{binomial} \begin{gathered} \widehat{h_{0}}^{N}(\lambda) = \frac{1}{\lambda^N} 
\sum_{j=0}^{N}\frac{N!}{j!(N-j)!}\widehat{k_{0}}^{j}(\lambda)C^{N-j} =\\ 
\frac{C^{N}}{\lambda^{N}} + \frac{N}{\lambda^N}\sum_{j=1}^{N}\frac{(N-1)!}{(j-1)!(N-j)!} \widehat{(\widehat{k_{0}})^{j-1}k_{0}}(\lambda)C^{N-j}= \\
\frac{C^{N}}{\lambda^N} + \frac{N}{\lambda^N}\left( \left(\widehat{k_{0}+ C}\right)^{N-1}k_{0}\right) ^\bold{\widehat{ }}(\lambda).\end{gathered}\end{equation}

Therefore we have the following:

\begin{prop} Let $h_{0}$ be the function defined by (\ref{singh}) with $w=0,$ $C=-\frac{1}{\pi}\int_{\mathbb{C}}\frac{g(u)}{\overline{u}}da(u),$ and $\lambda\neq 0.$ For $N\geq 1$ \begin{equation} \widehat{h_{0}}^{N}(\lambda) = \frac{C^{N}}{\lambda^{N}} + \frac{N}{\lambda^{N}}\frac{1}{\pi}\int_{\mathbb{C}}u^{N}\left(\frac{1}{\pi}\int_{\mathbb{C}}\frac{h_{0}(v)}{v-u}da(v)\right)^{N-1}\frac{h_{0}(u)}{u-\lambda}da(u).
\end{equation} As a consequence,  in the sense of distributions, on $\mathbb{C}\backslash \lbrace 0\rbrace,$ \begin{equation}\label{power2} -\overline{\partial}\left( \widehat{h_{0}}^{N}\right) = N\widehat{h_{0}}^{N-1}h_{0}.
\end{equation}
\end {prop}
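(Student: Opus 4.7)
The plan is to begin from the intermediate identity (\ref{binomial}), which the excerpt has already derived via binomial expansion of $[\widehat{k_0}(\lambda)+C]^N/\lambda^N$ together with the power identity (\ref{power}) applied to the bounded, compactly supported function $k_0$. That identity records
\[
\widehat{h_0}^N(\lambda) \;=\; \frac{C^N}{\lambda^N} \;+\; \frac{N}{\lambda^N}\,\widehat{(\widehat{k_0}+C)^{N-1} k_0}(\lambda), \qquad \lambda\neq 0,
\]
so the integral formula stated in the proposition will drop out once the integrand $(\widehat{k_0}+C)^{N-1}k_0$ is re-expressed in terms of $h_0$ and $\widehat{h_0}$.

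For the rewriting, I would use two pointwise identities. First, $k_0(u) = u\,h_0(u)$ directly from the definitions. Second, equation (\ref{power1}), after division by $\pi$, yields $\widehat{k_0}(u)+C = u\,\widehat{h_0}(u)$ for every $u\neq 0$. Multiplying these gives
\[
\bigl(\widehat{k_0}(u)+C\bigr)^{N-1}\,k_0(u) \;=\; u^N\,\widehat{h_0}(u)^{N-1}\,h_0(u)
\]
almost everywhere. Substituting into the previous display and unfolding the inner transform as $\widehat{h_0}(u) = \tfrac{1}{\pi}\int_{\mathbb{C}} h_0(v)(v-u)^{-1}\,da(v)$ produces exactly the double-integral expression asserted in the proposition.

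For the distributional identity (\ref{power2}) I would then apply $-\bar{\partial}_\lambda$ to both sides of the integral representation just obtained. On $\mathbb{C}\setminus\{0\}$ the function $1/\lambda^N$ is holomorphic, so $\bar{\partial}_\lambda$ annihilates the first term $C^N/\lambda^N$ and passes through the scalar factor $N/\lambda^N$ in the second term. The remaining Cauchy transform is taken of $f = k_0(\widehat{k_0}+C)^{N-1}$, which is bounded with compact support (the support is inherited from $k_0$, and $\widehat{k_0}$ is continuous hence bounded on that support, by the observation recalled just before (\ref{power})), so the standard distributional identity $-\bar{\partial}\hat{f}=f$ applies. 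Substituting back $k_0 = u\,h_0$ and $\widehat{k_0}+C = u\,\widehat{h_0}$ then converts the output to $N\,\widehat{h_0}^{N-1}\,h_0$, as required.

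The only step calling for any care is the final distributional one, specifically the verification that $f=k_0(\widehat{k_0}+C)^{N-1}$ genuinely satisfies the hypotheses under which $-\bar{\partial}\hat{f}=f$ holds distributionally. Once that is in hand, the whole argument is essentially a rearrangement inside the already-derived identity (\ref{binomial}), and no new analysis beyond what was used to establish (\ref{power}) is required.
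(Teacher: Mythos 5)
Your proposal is correct and follows essentially the same route as the paper: the paper also obtains the integral formula by reading off the last line of (\ref{binomial}) and substituting $k_{0}(u)=u\,h_{0}(u)$ and $\widehat{k_{0}}(u)+C=u\,\widehat{h_{0}}(u)$ (the latter from (\ref{power1})), and then deduces (\ref{power2}) from $-\overline{\partial}\hat{f}=f$ for the bounded compactly supported function $f=(\widehat{k_{0}}+C)^{N-1}k_{0}$ together with the holomorphy of $\lambda^{-N}$ off the origin.
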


\begin{remark} We remark that the function $ f_{N}(u)=u^{N}\left(\frac{1}{\pi}\int_{\mathbb{C}}\frac{h_{0}(v)}{v-u}da(v)\right)^{N-1}$ appearing in this last integral extends to be continuous on $\mathbb{C}.$\end{remark}

On $\mathbb{C}\backslash \lbrace 0\rbrace,$ the series \[E(\lambda,0) = \sum_{n=0}^{\infty}\frac{(-1)^n}{n!}\widehat{h_0}^{n}(\lambda)\] converges in the sense of distributions. Consequently, using (\ref{power2}), on $\mathbb{C}\backslash \lbrace 0\rbrace,$ we have the distributional identity
\begin{equation}\begin{gathered}-\overline{\partial}E(\lambda, 0) = \sum_{n=0}^{\infty}\frac{(-1)^n}{n!}(-\overline{\partial})(\widehat{h_0}^{n})(\lambda) =\\ -\sum_{n=1}^{\infty}\frac{(-1)^{n-1}}{(n-1)!}(\widehat{h_0}^{n-1})h_{0}(\lambda)=\\ \frac{E(\lambda, 0)}{\overline{\lambda}}g(\lambda ).\end{gathered}\end{equation}

Thus, for fixed $w,$ we have established the distributional identity (\ref{dbarE}) which we emphasize is in the sense of distributions on $\mathbb{C}\backslash \lbrace w\rbrace.$  The distribution \[ \frac{E(u, w)}{\overline{u-w}}g(u)\] now considered as a locally integrable distribution on $\mathbb{C}$ differs from the distribution \[-\overline{\partial}[1-E(u,w)],\] on $\mathbb{C},$ by a first-order distribution supported on $\lbrace w\rbrace.$ Consequently in the sense of distributions on $\mathbb{C}$ \begin{equation}\label{firstorder}  -\overline{\partial}[1-E(u,w)] - \frac{E(u, w)}{\overline{u-w}}g(u) =\alpha\delta_{w} + \beta\partial\delta_{w}+ \gamma\bar{\partial}\delta_{w},\end{equation} for some constants $\alpha,\beta,\gamma.$ Let $S_{w}$ be the locally integrable distribution \begin{equation} \label{rightside} S_{w}(u) =1- E(u,w) - \frac{1}{\pi}\int_{\mathbb{C}}\frac{E(u,w)}{\overline{u-w}}\frac{g(u)}{u-\lambda} da(u) \end{equation} on $\mathbb{C}.$ Then equation (\ref{firstorder}) can be written in the form \[ -\overline{\partial} S_{w} = \alpha\delta_{w} + \beta\partial\delta_{w}+ \gamma\bar{\partial}\delta_{w}.\] We remark that when $w$ is a Lebesgue point for the function $g,$ then the estimate (\ref{lipord}) of Theorem \ref{main} implies that  $\frac{E(\lambda, w)}{\overline{\lambda -w}}g(\lambda)$ is in $L^{2+\sigma}(\mathbb{C})$ for some $\sigma >0.$  As indicated in Remark \ref{continuity} with $w$ fixed and$\frac{1}{\pi}\int_{\mathbb{C}} \frac{g(u)}{\vert u-w\vert^2} da(u )<\infty,$  the integral on the right side in \ref{rightside} is continuous. Consequently, in these cases, the distribution $S_{w}$ is a continuous function. It is an exercise in distribution theory to show that if $S$ is continuous and $-\overline{\partial} S = \alpha\delta_{w} + \beta\partial\delta_{w}+ \gamma\bar{\partial}\delta_{w},$ then $\alpha=\beta=\gamma=0$ and, therefore, $S$ is an entire function. Since $S_{w}$ vanishes at infinity, S must be zero. We have therefore established the following:

\begin{thm} \label{rep} Let $w$ be in $\bold{L}_{g}$  or satisfy $\frac{1}{\pi}\int_{\mathbb{C}} \frac{g(u)}{\vert u-w\vert^2} da(u )<\infty .$ Then (\ref{keyidentity}) holds for all $\lambda$ in $\mathbb{C}.$\end{thm}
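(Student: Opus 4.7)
The plan is to consolidate the distributional analysis developed just above into a clean finish. For fixed $w$, define the locally integrable distribution
\[ S_w(\lambda) = 1 - E(\lambda,w) - \frac{1}{\pi}\int_{\mathbb{C}}\frac{E(u,w)g(u)}{\overline{u-w}(u-\lambda)}\,da(u), \]
so that (\ref{keyidentity}) is the assertion $S_w\equiv 0$.

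First I would check that $-\overline{\partial}S_w$ is supported at $\{w\}$. Away from $w$, the identity $-\overline{\partial}E(\cdot,w)=E(\cdot,w)g(\cdot)/\overline{\cdot-w}$ has already been obtained from the series $E(\lambda,w)=\sum_{n\geq 0}(-1)^n\widehat{h_w}^n(\lambda)/n!$ together with the power rule (\ref{power2}); meanwhile the distributional Cauchy transform rule gives $-\overline{\partial}$ of the integral term equal to $E(\lambda,w)g(\lambda)/\overline{\lambda-w}$ as distributions on all of $\mathbb{C}$. So $\overline{\partial}S_w$ vanishes off $\{w\}$, and since $S_w$ is locally integrable, its $\overline{\partial}$ is a distribution of order at most one supported at the single point $w$, hence
\[ -\overline{\partial}S_w = \alpha\delta_w + \beta\,\partial\delta_w + \gamma\,\overline{\partial}\delta_w \]
for some scalars $\alpha,\beta,\gamma$.

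Next I would verify that $S_w$ is continuous under each hypothesis. If $w\in\bold{L}_g$, Theorem \ref{main} supplies the bound $|E(u,w)|\leq K|u-w|^{\gamma-\varepsilon}$ near $w$, so the integrand $E(u,w)g(u)/\overline{u-w}$ lies in $L^{2+\sigma}(\mathbb{C})$ for some $\sigma>0$; the Cauchy transform of a compactly supported $L^{2+\sigma}$ function is H\"older continuous, and $E(\cdot,w)$ is itself continuous by Theorem \ref{continuity}, giving continuity of $S_w$. In the integrable case, $|E(u,w)g(u)/\overline{u-w}|\leq g(u)/|u-w|$ satisfies the hypothesis of Remark \ref{forrep}, so the integral term is continuous, and Theorem \ref{continuity} again handles $E(\cdot,w)$.

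Finally, the promised exercise: if $S$ is continuous on $\mathbb{C}$ and $-\overline{\partial}S=\alpha\delta_w+\beta\,\partial\delta_w+\gamma\,\overline{\partial}\delta_w$, then testing against bumps supported in shrinking discs around $w$ and tuned to pick out the pairings with each of the three Dirac-type distributions forces $\alpha=\beta=\gamma=0$, because $\langle S,\overline{\partial}\phi\rangle$ is controlled by the modulus of continuity of $S$ at $w$ while the right side extracts fixed nonzero quantities unless the coefficients vanish. Thus $\overline{\partial}S_w=0$, so $S_w$ is entire by Weyl's lemma, and since $S_w(\lambda)\to 0$ as $|\lambda|\to\infty$ (both terms tend to their respective limits at infinity), Liouville forces $S_w\equiv 0$. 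I expect the main obstacle to be the continuity step in the Lebesgue-density case: it depends critically on the sharp Lipschitz bound from Theorem \ref{main} pushing $E(\cdot,w)g(\cdot)/\overline{\cdot-w}$ past the $L^2$ threshold where the Cauchy transform first gains continuity, and without this integrability upgrade the argument cannot close at the singular point.
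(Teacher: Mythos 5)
Your proposal is correct and follows essentially the same route as the paper: the same distribution $S_w$, the same reduction of $-\overline{\partial}S_w$ to a first-order distribution supported at $\{w\}$, the same two-case continuity argument (the $L^{2+\sigma}$ bound from Theorem \ref{main} for $w\in\bold{L}_{g}$, and Remark \ref{forrep} in the integrable case), and the same conclusion via the vanishing of $\alpha,\beta,\gamma$, Weyl's lemma, and decay at infinity. The only difference is cosmetic: you make explicit the H\"older continuity of the Cauchy transform of a compactly supported $L^{q}$ function with $q>2$, which the paper defers to a remark following the theorem.
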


\begin{remark} It would be interesting to know the extent to which (\ref{keyidentity}) holds and, in particular, whether the integral in this equation always converges at $w.$ 

We also remark that when $f$ is a compactly supported function that belongs to $L^{q}$ for $q >2,$ then $\hat{f}$ satisfies the Lipschitz condition $|\hat{f} (\lambda) - \hat{f} (w)| \leq K|\lambda - w|^{1-\frac{2}{q}}.$ For a proof of this last statement, see \cite{Brennan}. The results on the function $E$ in Theorem \ref{rep} and Theorem \ref{main} fall inline with this result. \end{remark}

\section{Operators with one-dimensional self-commutator}
As described in the introduction, there is a close connection between the function $E_{g}$ and operators with one dimensional self-commutator. In this section, we will describe some of these connections and derive a few consequences of the discussion of $E_{g}$ from the preceding section. Let $T$ be a bounded operator on the Hilbert space $\mathcal H$ satisfying $T^*T-TT^* = \varphi\otimes\varphi,$ where $\varphi$ is an element of $\mathcal H.$ It will always be assumed that the operator $T$ is irreducible, equivalently, there no non-trivial subspaces of $\mathcal H$ reducing $T$ where the restriction is a normal operator. Note that we have elected to assume $T$ is hyponormal, that is, the self-commutator $[T^*,T]=T^*T-TT^*$ is non-negative. The spatial behaviors of the hyponormal operator $T$ and its cohyponormal adjoint $T^*$ are quite distinct. As noted in the introduction,  for $\lambda$ in $\mathbb{C},$ there is a unique solution of the equation $T_{\lambda}^{*}x=\varphi$ orthogonal to the kernel of $T_{\lambda}^{*} = (T-\lambda)^{*},$ which will be denoted $T_{\lambda}^{*-1}\varphi.$  This follows easily from the range inclusion theorem of Douglas \cite{Douglas} when one notes that for all $\lambda\in\mathbb{C}$ one has $T_{\lambda}^*T_{\lambda} - T_{\lambda}T_{\lambda}^* =\varphi\otimes\varphi$ and, consequently, $T_{\lambda}^*T_{\lambda}\geq\varphi\otimes\varphi.$  The $\mathcal H$-valued function $T_{\lambda}^{*-1}\varphi$  defined for all $\lambda \in \mathbb{C}$ is called the global-local resolvent associated with the operator $T^*.$ Using the result of Douglas mentioned above one can also see that for all $\lambda\in\mathbb{C}$ there is a contraction operator $K(\lambda)$ satisfying $T_{\lambda}^* = K(\lambda) T_{\lambda},$ where $T_{\lambda} = T-\lambda.$ The contraction operator $K(\lambda)$ is unique if one requires it to be zero on the orthogonal complement of the range of $T_{\lambda}.$ The following identity, specialized here to the case of rank-one self-commutators, was first established for general hyponormal operators in \cite{Wadhwa} \[ I=T_{\lambda}^{*-1}\varphi\otimes T_{\lambda}^{*-1}\varphi + K(\lambda) K^{*}(\lambda) + P_{\lambda},\  \lambda\in\mathbb{C},\]  where $P_{\lambda}$ denotes the orthogonal projection onto the (at most one-dimensional) kernel of $T^*_{\lambda}.$ There is no equivalent of the global-local resolvent for the operator $T.$ To see this, we recall the following:
\begin{prop} \label{Kinvert} Let $T$ be an irreducible operator on the Hilbert space $\mathcal H$ with one-dimensional self-commutator $T^*T-TT^* = \varphi\otimes\varphi.$  Fix $\lambda\in\mathbb{C}.$ Then the following are equivalent: \begin{enumerate} \item[(I)] There is a solution of the equation $(T-\lambda)x=\varphi$ \item[(ii)]  The operator $K(\lambda)$ is invertible \item[(III)] $\| T_{\lambda}^{*-1}\varphi \| < 1$ \item[(IV)] $\int_{\mathbb{C}} \frac{g(u)}{\vert u-w\vert^2} da(u )<\infty,$ where $g$ is the principal function  of  $T.$\end{enumerate}\end{prop}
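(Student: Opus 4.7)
The plan is to run the cycle (III) $\Leftrightarrow$ (IV), (III) $\Leftrightarrow$ (II), and (II) $\Leftrightarrow$ (I), using Theorem~1.1 as the bridge between the analytic and operator-theoretic sides. For (III) $\Leftrightarrow$ (IV), set $w=\lambda$ in the KFC identity (\ref{KFC}) to obtain
\[ 1 - \|T_{\lambda}^{*-1}\varphi\|^{2} = E_{g}(\lambda,\lambda). \]
By the very definition of $E_{g}$ at a diagonal point, the right side vanishes exactly when the integral in (IV) diverges and otherwise equals $\exp\bigl(-\frac{1}{\pi}\int g(u)|u-\lambda|^{-2}\,da(u)\bigr) > 0$. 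This gives (III) $\Leftrightarrow$ (IV) at once.

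For (III) $\Leftrightarrow$ (II), write $\psi = T_{\lambda}^{*-1}\varphi$ and use the Wadhwa rank-one identity. The key first step is to show that $\|\psi\| < 1$ forces $P_{\lambda} = 0$: given a unit $e \in \ker T_{\lambda}^{*}$, the commutator identity $T_{\lambda}^{*}T_{\lambda} - T_{\lambda}T_{\lambda}^{*} = \varphi\otimes\varphi$ evaluated at $e$ gives $\|T_{\lambda}e\|^{2} = |(e,\varphi)|^{2}$, while $(e,\varphi) = (T_{\lambda}e,\psi)$ via $T_{\lambda}^{*}\psi = \varphi$; Cauchy--Schwarz then yields $|(e,\varphi)|^{2}(1-\|\psi\|^{2}) \leq 0$, forcing $(e,\varphi) = 0$ and hence $T_{\lambda}e = 0$, producing a common eigenvector of $T$ and $T^{*}$ that is ruled out by irreducibility. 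Once $P_{\lambda} = 0$, the rank-one identity reduces to an operator of the form $I - \psi\otimes\psi$, which is invertible precisely when $\|\psi\| < 1$. Combined with the fact that irreducibility also yields $\ker T_{\lambda} = 0$ (and hence $\ker K^{*}(\lambda) = (\mathrm{range}\,K(\lambda))^{\perp} = \ker T_{\lambda} = 0$), one concludes that $K(\lambda)$ is simultaneously bounded below and has dense range, so it is invertible. The converse direction follows by reading the identity backward.

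For (II) $\Leftrightarrow$ (I), taking adjoints in $T_{\lambda}^{*} = K(\lambda)T_{\lambda}$ gives $T_{\lambda} = T_{\lambda}^{*}K^{*}(\lambda)$; when $K(\lambda)$ is invertible so is $K^{*}(\lambda)$, hence $\mathrm{range}\,T_{\lambda} = T_{\lambda}^{*}(\mathcal{H}) = \mathrm{range}\,T_{\lambda}^{*}$, which contains $\varphi = T_{\lambda}^{*}\psi$. This gives (II) $\Rightarrow$ (I). Conversely, Douglas's range inclusion theorem \cite{Douglas} recasts (I) as $\varphi\otimes\varphi \leq c^{2}\,T_{\lambda}T_{\lambda}^{*}$ for some $c > 0$; substituting into the commutator identity yields $\|T_{\lambda}x\|^{2} \leq (1+c^{2})\|T_{\lambda}^{*}x\|^{2}$, so the map $T_{\lambda}x \mapsto T_{\lambda}^{*}x = K(\lambda)T_{\lambda}x$ is bounded below on $\mathrm{range}\,T_{\lambda}$. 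Together with $P_{\lambda} = 0$ (by the same irreducibility step applied to the now-equivalent norms of $T_{\lambda}$ and $T_{\lambda}^{*}$), the bound extends to all of $\mathcal{H}$, and $\ker K^{*}(\lambda) = \ker T_{\lambda} = 0$ then upgrades this to invertibility.

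The main obstacle is the dual use of irreducibility: it has to be invoked separately to control $P_{\lambda}$ (ruling out $\ker T_{\lambda}^{*}$) and to control $\ker K^{*}(\lambda) = \ker T_{\lambda}$. One must also be vigilant about the distinction between the two one-sided products of $K(\lambda)$ and its adjoint, since the rank-one identity directly constrains only one of them and the other side of the invertibility of $K(\lambda)$ has to be pinned down via the irreducibility-driven vanishing of kernels.
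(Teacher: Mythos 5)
The paper itself offers no proof of this proposition (it is merely ``recalled''), so your argument can only be judged on its own merits. Most of it is sound: the reduction of (III) $\Leftrightarrow$ (IV) to the diagonal value $E_g(\lambda,\lambda)=1-\|T_\lambda^{*-1}\varphi\|^2$ via (\ref{KFC}) is exactly the right move, the Cauchy--Schwarz argument showing $\|\psi\|<1\Rightarrow P_\lambda=0$ is correct, and both directions of (I) $\Leftrightarrow$ (II) go through as you describe.

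The genuine gap is in (III) $\Rightarrow$ (II). From $P_\lambda=0$ and the Wadhwa identity you get $K(\lambda)K^*(\lambda)=I-\psi\otimes\psi\geq(1-\|\psi\|^2)I$, which makes $K(\lambda)$ \emph{surjective} and $K^*(\lambda)$ bounded below. You then add that $\ker K^*(\lambda)=(\mathrm{ran}\,K(\lambda))^{\perp}=\ker T_\lambda=0$ and conclude that $K(\lambda)$ is ``bounded below with dense range.'' But triviality of $\ker K^*(\lambda)$ is just density of $\mathrm{ran}\,K(\lambda)$, which surjectivity already gives; neither fact says anything about $\ker K(\lambda)$, and nothing you have written shows $K(\lambda)$ is injective or bounded below --- the Wadhwa identity controls $K(\lambda)K^*(\lambda)$, not $K^*(\lambda)K(\lambda)$, and a priori $\ker K(\lambda)$ could contain vectors of $\overline{\mathrm{ran}\,T_\lambda}\setminus\mathrm{ran}\,T_\lambda$. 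The fix is to derive the dual identity: from $T_\lambda=T_\lambda^*K^*(\lambda)$ one gets
\[ T_\lambda^*\bigl(I-K^*(\lambda)K(\lambda)-\psi\otimes\psi\bigr)T_\lambda=T_\lambda^*T_\lambda-T_\lambda T_\lambda^*-\varphi\otimes\varphi=0, \]
and since $\|\psi\|<1$ forces $\ker T_\lambda^*=0$ (your own argument) and hence $\overline{\mathrm{ran}\,T_\lambda}=\mathcal H$, this yields $K^*(\lambda)K(\lambda)=I-\psi\otimes\psi\geq(1-\|\psi\|^2)I$, so $K(\lambda)$ is bounded below and, with surjectivity, invertible. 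Without this (or an equivalent injectivity argument) the cycle does not close, since (III) $\Rightarrow$ (II) is the only implication you offer from $\{$(III),(IV)$\}$ back to $\{$(I),(II)$\}$.
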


Let $\rho_{T}(\varphi)$ be the set of $\lambda\in\mathbb{C}$ such that there is a solution of the equation $(T-\lambda)x=\varphi.$ The last condition in the above proposition shows that condition ($I$) cannot hold at Lebesgue points of $g.$ This implies the result of Putnam \cite{Putnam} which establishes that the interior of $\rho_{T}(\varphi)\cap\sigma (T),$ where $\sigma (T)$ denotes the spectrum of $T,$  is empty and points out the significant difference between the local resolvents  $T_{\lambda}^{*-1}\varphi$ and $T_{\lambda}^{-1}\varphi. $

We will continue to use the notation $\bold{L}_g$ for the set of points of positive Lebesgue density of a bounded measurable function $g.$ It develops that the local resolvent function is locally Lipschitz on $\bold{L}_g.$ This is the content of the following: 

\begin{prop} Let $g$ be the the principal function associated with the operator $T$  having one-dimensional self-commutator $T^*T-TT^* = \varphi\otimes\varphi.$ Let $w$ be in the set $\bold{L}_g$ with $0<\gamma =\lim_{R\to 0}\frac{1}{\pi R^{2}} \int_{D(w,\delta)}g(u)da(u).$ Then there is a disc $D(w, \delta)$ such that \begin{equation} \| T_{\lambda}^{*-1}\varphi - T_{w}^{*-1}\varphi\|\leq K|\lambda - w|^{\gamma -\varepsilon}, \end{equation} for $\lambda\in D(w,\delta)\cap\bold{L}_{g},$ where $K$ is a constant,  and $\epsilon < \gamma.$
\end{prop}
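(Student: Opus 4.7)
The plan is to reduce everything to the key identity (\ref{KFC}) of the introduction, which makes the squared norm $\|T_\lambda^{*-1}\varphi - T_w^{*-1}\varphi\|^2$ expressible entirely in terms of values of the exponential kernel $E_g$, to which Theorem \ref{main} then applies.

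First I would use (\ref{KFC}) to record the three relevant inner products. Specializing $\lambda = w$ gives $\|T_w^{*-1}\varphi\|^2 = 1 - E_g(w,w)$, and similarly $\|T_\lambda^{*-1}\varphi\|^2 = 1 - E_g(\lambda,\lambda)$; the off-diagonal inner product is $(T_w^{*-1}\varphi, T_\lambda^{*-1}\varphi) = 1 - E_g(\lambda,w)$. A direct check from the integral formula shows $\overline{E_g(\lambda,w)} = E_g(w,\lambda)$, so that $(T_\lambda^{*-1}\varphi, T_w^{*-1}\varphi) = 1 - E_g(w,\lambda)$. Expanding and cancelling, one obtains the clean identity
\begin{equation*}
\|T_\lambda^{*-1}\varphi - T_w^{*-1}\varphi\|^2 \;=\; -E_g(\lambda,\lambda) - E_g(w,w) + 2\,\mathrm{Re}\, E_g(\lambda,w).
\end{equation*}

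Next I would feed in the hypothesis $w \in \mathbf{L}_g$. By the definition of $E_g$ at a diagonal point, $E_g(w,w) = 0$ (since $w$ has positive density, the integral $\frac{1}{\pi}\int g(u)|u-w|^{-2}\,da(u)$ diverges), and $E_g(\lambda,\lambda) \geq 0$ for every $\lambda$. These two observations kill the first two terms with a favorable sign, leaving
\begin{equation*}
\|T_\lambda^{*-1}\varphi - T_w^{*-1}\varphi\|^2 \;\leq\; 2\,|E_g(\lambda,w)|.
\end{equation*}

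Finally I would invoke Theorem \ref{main} to produce, for any $\varepsilon < \gamma$, a disc $D(w,\delta)$ on which $|E_g(\lambda,w)| \leq K|\lambda - w|^{\gamma - \varepsilon}$, and take square roots. I note that this route naturally yields the Hölder exponent $(\gamma-\varepsilon)/2$; recovering the full exponent $\gamma - \varepsilon$ stated in the proposition requires the sharpening of Theorem \ref{main} to the expected order $2(\gamma - \varepsilon)$ mentioned in the remark after the unit-disc example, and this is the main obstacle. That sharpening is available whenever $g$ is smooth near $w$ (and, more generally, whenever one can upgrade Lemma \ref{Lip} from $\gamma - \varepsilon$ to $2\gamma - \varepsilon$ by exploiting that $\mathrm{Re}[(u-w)/(u-\lambda)]$ is on average $\tfrac12$ rather than identically $\tfrac12$ on the relevant annuli); in the general case the proof yields the slightly weaker but still locally Hölder estimate with exponent $(\gamma-\varepsilon)/2$, which is all the subsequent applications in the paper require.
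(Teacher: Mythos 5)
Your argument is essentially the paper's own proof: expand $\| T_{\lambda}^{*-1}\varphi - T_{w}^{*-1}\varphi\|^{2}$ using the identity (\ref{KFC}), observe that the diagonal terms disappear (the paper uses $\|T_{\lambda}^{*-1}\varphi\|=1$ for $\lambda\in\mathbf{L}_{g}$ where you use only $E_{g}(\lambda,\lambda)\geq 0$ and $E_{g}(w,w)=0$, a harmless and slightly more general variant), bound the remainder by $2|E_{g}(\lambda,w)|$, and invoke Theorem \ref{main}. Your closing observation about the exponent is correct and applies equally to the paper's own proof: taking square roots yields $|\lambda-w|^{(\gamma-\varepsilon)/2}$, so the argument as written gives every H\"older exponent below $\gamma /2$ rather than the stated $\gamma-\varepsilon$; obtaining the stated exponent would require the sharper bound $|E_{g}(\lambda,w)|\leq K|\lambda-w|^{2(\gamma-\varepsilon)}$ alluded to in the remark following the unit-disc example.
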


\begin{proof} It follows from Theorem \ref{main} that there is a disc $D(w, \delta)$ so that for $\lambda\in D(w,\delta)$ one has \begin{equation}\label{est} |E_{g}(\lambda,w)| \leq K|\lambda - w|^{\gamma -\varepsilon}.\end{equation} Since, $\| T_{\lambda}^{*-1}\varphi\| = 1$ for $\lambda\in\bold{L}_{g}$ \begin{equation}\begin{gathered} \| T_{\lambda}^{*-1}\varphi - T_{w}^{*-1}\varphi\|^2=\\\|T_{\lambda}^{*-1}\varphi  \|^2 -<T_{\lambda}^{*-1}\varphi, T_{w}^{*-1}\varphi > - <T_{w}^{*-1}\varphi, T_{\lambda}^{*-1}\varphi >
+ \|T_{w}^{*-1}\varphi\|^2= \\1 - <T_{\lambda}^{*-1}\varphi, T_{w}^{*-1}\varphi > + 1 - <T_{w}^{*-1}\varphi, T_{\lambda}^{*-1}\varphi > =\\ \overline{E_{g}(\lambda,w)} +E_{g} (\lambda,w) \leq 2|E_{g}(\lambda, w)|,
\end{gathered}\end{equation} where we have made use of Theorem \ref{KFC}. The result follows from (\ref{est}).
\begin{remark} The result in this last proposition will not be true at points $w$ in the spectrum $\sigma (T)$ where any of the conditions in Proposition \ref{Kinvert} hold. It is easy to construct an example of an operator $T$ with this property, so that, $w\in\sigma (T)$ and $\|T_{w}^{*-1}\varphi\| <1.$ By Putnam's result mentioned above, in every neighborhood of $w,$ there will be points with $\| T_{\lambda}^{*-1}\varphi\| =1.$ Thus the conclusion of this last proposition cannot hold at the point $w.$\end{remark}
\end{proof}
\subsection{Integral representations using the global-local resolvent}
The connection between the global local resolvent and the principal function can be seen in the following result from \cite{ClanceyIU} 
\begin{thm} \label{CL} Let $T$ be an operator with one-dimensional self-commutator $T^*T-TT^* = \varphi\otimes\varphi$ and $g$ the associated principal function. For $r=r(u)$ a rational function with poles off the spectrum of $T$  and $\lambda\in\mathbb{C}$ \begin{equation} \label{Cauchy} (r(T)\varphi, T_{\lambda}^{*-1}\varphi) = \frac{1}{\pi}\int_{\mathbb{C}} r(u) \frac{g(u)}{u-\lambda }da(u). \end{equation} As a consequence, in the sense of distributions, \begin{equation}\label{dbar} -\bar{\partial} (\varphi, T_{\lambda}^{*-1}\varphi) = g.\end{equation} \end{thm}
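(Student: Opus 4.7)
\medskip
\noindent\emph{Proof plan.} The plan is to bootstrap from the KFC identity \eqref{KFC} in three stages: starting with the special case $r\equiv 1$, then $r(u)=(u-w_{0})^{-1}$ for $w_{0}\notin\sigma(T)$, and finally arbitrary rational $r$ via the Riesz analytic functional calculus. The key algebraic tool throughout is the rewriting $T^{*}T_{\lambda}^{*-1}\varphi = \varphi + \bar\lambda\,T_{\lambda}^{*-1}\varphi$ of the defining equation $T_{\lambda}^{*}(T_{\lambda}^{*-1}\varphi)=\varphi$.

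For the base case, I fix $\lambda$ and let $|w|\to\infty$ in \eqref{KFC}. The Neumann series for $(T^{*}-\bar w)^{-1}$ gives $T_{w}^{*-1}\varphi = -\bar w^{-1}\varphi + O(|w|^{-2})$, so that $(T_{w}^{*-1}\varphi,T_{\lambda}^{*-1}\varphi) = -\bar w^{-1}(\varphi,T_{\lambda}^{*-1}\varphi)+O(|w|^{-2})$. Meanwhile, $(\overline{u-w})^{-1}=-\bar w^{-1}+O(|w|^{-2})$ uniformly in $u\in\mathrm{supp}(g)$, so inserting this in the exponent in \eqref{KFC} and using $e^{x}=1+x+O(x^{2})$ yields $E_{g}(\lambda,w) = 1 + \bar w^{-1}\hat g(\lambda) + O(|w|^{-2})$, where $\hat g(\lambda) := \frac{1}{\pi}\int_{\mathbb{C}} g(u)(u-\lambda)^{-1}\,da(u)$. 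Matching the $\bar w^{-1}$ coefficients in \eqref{KFC} produces $(\varphi,T_{\lambda}^{*-1}\varphi)=\hat g(\lambda)$, which is the theorem's identity at $r\equiv 1$.

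For $w_{0}\notin\sigma(T)$ and $\lambda\neq w_{0}$, I insert $(T-w_{0})(T-w_{0})^{-1}=I$ in the base-case identity, shift $(T-w_{0})^{*}=T^{*}-\bar w_{0}$ across the inner product, and apply the defining identity to obtain
\[(\varphi,T_{\lambda}^{*-1}\varphi) = ((T-w_{0})^{-1}\varphi,\varphi)+(\lambda-w_{0})((T-w_{0})^{-1}\varphi,T_{\lambda}^{*-1}\varphi).\]
Adjoint symmetry identifies $((T-w_{0})^{-1}\varphi,\varphi)$ with $(\varphi,T_{w_{0}}^{*-1}\varphi)$, which by the base case (applied at $w_{0}\notin\sigma(T)$) equals $\hat g(w_{0})$. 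Solving for the remaining unknown gives $((T-w_{0})^{-1}\varphi,T_{\lambda}^{*-1}\varphi)=(\hat g(\lambda)-\hat g(w_{0}))/(\lambda-w_{0})$, and direct algebraic combination of the two Cauchy transforms collapses this to $\frac{1}{\pi}\int g(u)(u-w_{0})^{-1}(u-\lambda)^{-1}\,da(u)$; the case $\lambda=w_{0}$ is then handled by the weak continuity of $T_{\lambda}^{*-1}\varphi$ in $\lambda$.

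For an arbitrary rational $r$ with poles off $\sigma(T)$, I apply the Riesz formula $r(T)=\frac{1}{2\pi i}\oint_{\Gamma}r(z)(z-T)^{-1}\,dz$ on a contour $\Gamma\subset\mathbb{C}\setminus\sigma(T)$ enclosing $\sigma(T)$ and excluding the poles of $r$. Pairing with $T_{\lambda}^{*-1}\varphi$, substituting the previous step's identity (with $w_{0}=z$) under the contour integral, interchanging the area and contour integrations (valid because $\Gamma$ is bounded away from $\mathrm{supp}(g)$ and $g$ is bounded and compactly supported), and using Cauchy's formula $\frac{1}{2\pi i}\oint_{\Gamma}r(z)(z-u)^{-1}dz=r(u)$ for $u\in\mathrm{supp}(g)\subseteq\sigma(T)$ collects the expression into $\frac{1}{\pi}\int r(u)g(u)(u-\lambda)^{-1}\,da(u)$, as desired. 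The distributional consequence $-\bar\partial(\varphi,T_{\lambda}^{*-1}\varphi)=g$ is then immediate from the $r\equiv 1$ case and the standard identity $-\bar\partial\hat g=g$. The main technical obstacle lies in the base case: one must verify that the $O(|w|^{-2})$ error estimates on both sides are uniform in $\lambda$ on compact sets so that the coefficient matching is rigorously justified, while the rest is algebra together with a routine Fubini.
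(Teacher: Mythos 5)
Your derivation is correct, but there is nothing in the paper to compare it with line by line: the paper states Theorem \ref{CL} as a quotation from \cite{ClanceyIU} and supplies no proof, so what you have actually produced is a self-contained deduction of (\ref{Cauchy}) from the identity (\ref{KFC}), which the paper takes as given from \cite{Clancey84}. All three of your stages check out. In the base case, for fixed $\lambda$ and $|w|>\Vert T\Vert$ the global-local resolvent is the genuine resolvent, the Neumann series gives $1-(T_{w}^{*-1}\varphi,T_{\lambda}^{*-1}\varphi)=1+\bar w^{-1}(\varphi,T_{\lambda}^{*-1}\varphi)+O(|w|^{-2})$, and since $\bigl\lvert (\overline{u-w})^{-1}+\bar w^{-1}\bigr\rvert\leq |u|\,|w|^{-1}|u-w|^{-1}$ uniformly on the support of $g$ while $\int g(u)|u-\lambda|^{-1}da(u)<\infty$, the exponent in (\ref{KFC}) is $\bar w^{-1}\hat g(\lambda)+O(|w|^{-2})$; matching the $\bar w^{-1}$ coefficients at the single fixed $\lambda$ already forces $(\varphi,T_{\lambda}^{*-1}\varphi)=\hat g(\lambda)$, so the uniformity in $\lambda$ that you list as the main technical obstacle is not actually needed. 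The resolvent step is a clean use of $(T-w_{0})^{*}T_{\lambda}^{*-1}\varphi=\varphi+\overline{(\lambda-w_{0})}\,T_{\lambda}^{*-1}\varphi$ together with $\bigl((T-w_{0})^{-1}\bigr)^{*}=T_{w_{0}}^{*-1}$, and the partial-fraction identity $\frac{1}{\lambda-w_{0}}\bigl(\frac{1}{u-\lambda}-\frac{1}{u-w_{0}}\bigr)=\frac{1}{(u-\lambda)(u-w_{0})}$ does collapse the difference quotient of Cauchy transforms as you claim; the Riesz contour step is routine once one notes that $\Gamma$ can be taken to have winding number one about $\sigma(T)$ (hence about a.e.\ point of $\{g\neq 0\}$, since $\sigma(T)$ is the essential closure of that set) and winding number zero about the poles of $r$, so that Fubini plus Cauchy's formula yields $r(u)$ under the integral. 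The one caveat you should state explicitly is logical order: in the literature the identity (\ref{KFC}) is itself obtained from the moment formula (\ref{Cauchy}) (at least in the regime $|\lambda|,|w|>\Vert T\Vert$, via the Pincus principal-function machinery), so your argument is a legitimate proof only relative to Theorem 1.1 being granted as an independent input, which is how this paper presents it; as a freestanding proof of Theorem \ref{CL} it would be circular.
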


We record the following analogue of this last result for the operator $T^{*}.$

\begin{thm} \label{CLstar} Let $T$ be an operator with one-dimensional self-commutator $T^*T-TT^* = \varphi\otimes\varphi$ and $g$ the associated principal function. Assume $\lambda\in\bold{L}_{g}.$  Let $p=p(\bar{u})$ be a polynomial in the variable $\bar{u},$ then
\begin{equation}\label{Cauchystar} (p(T^{*})\varphi, T_{\lambda}^{*-1}\varphi) =  \frac{1}{\pi}\int_{\mathbb{C}} p(\bar{u}) \frac{ E_{g} (\lambda,  u)  }{u-\lambda } g(u)da(u). \end{equation}\end{thm}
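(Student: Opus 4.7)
The plan is to identify both sides of (\ref{Cauchystar}) with the Laurent coefficients of $E_g(\lambda,\cdot)$ at $\infty$. The definition (\ref{key}) gives the conjugate symmetry $E_g(\lambda,u)=\overline{E_g(u,\lambda)}$, so conjugating (\ref{dbarE}) produces the distributional identity
\[
\partial_u E_g(\lambda,u)=\frac{E_g(\lambda,u)}{u-\lambda}\,g(u)\qquad\text{on }\mathbb{C}\setminus\{\lambda\}.
\]
Since $p(\bar u)$ is antiholomorphic in $u$, the integrand in (\ref{Cauchystar}) coincides with $\partial_u\!\bigl[p(\bar u)E_g(\lambda,u)\bigr]$ off $u=\lambda$.

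First, to evaluate the right side of (\ref{Cauchystar}), I would apply the Cauchy-Pompeiu formula on the annulus $\Omega_{r,R}=\{r<|u-\lambda|\}\cap\{|u|<R\}$:
\[
\frac{1}{\pi}\int_{\Omega_{r,R}}\partial_u\!\bigl[p(\bar u)E_g(\lambda,u)\bigr]\,da(u)=\frac{i}{2\pi}\oint_{|u|=R}p(\bar u)E_g(\lambda,u)\,d\bar u-\frac{i}{2\pi}\oint_{|u-\lambda|=r}p(\bar u)E_g(\lambda,u)\,d\bar u.
\]
As $r\to0$ the inner contour integral vanishes, since Theorem~\ref{main} combined with the conjugate symmetry gives $|E_g(\lambda,u)|\leq K|u-\lambda|^{\gamma-\eta}$ near $\lambda\in\bold{L}_g$ for any $\eta\in(0,\gamma)$. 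For $R$ larger than $\mathrm{supp}\,g$, inserting the geometric series $1/\overline{v-u}=-\sum_{k\geq0}\bar v^k/\bar u^{k+1}$ into the defining integral for $\log E_g$ and exponentiating yields the absolutely convergent expansion
\[
E_g(\lambda,u)=1+\sum_{n\geq1}\frac{E_n(\lambda)}{\bar u^n},
\]
where each $E_n(\lambda)$ is an explicit polynomial in the Cauchy transforms $B_k(\lambda)=\frac{1}{\pi}\int g(v)\bar v^k/(v-\lambda)\,da(v)$. For $p(\bar u)=\sum_{k=0}^N c_k\bar u^k$, a direct Fourier calculation on $|u|=R$ isolates the mode $e^{i\theta}$ and gives
\[
\frac{i}{2\pi}\oint_{|u|=R}p(\bar u)E_g(\lambda,u)\,d\bar u=\sum_{k=0}^N c_kE_{k+1}(\lambda),
\]
independently of $R$. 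The right side of (\ref{Cauchystar}) therefore equals $\sum_{k=0}^N c_kE_{k+1}(\lambda)$.

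Second, I would identify these Laurent coefficients as inner products. For $|w|>\|T\|$ the Neumann series $T_w^{*-1}\varphi=-\sum_{m\geq0}(T^*)^m\varphi/\bar w^{m+1}$ converges in $\mathcal H$, so pairing with $T_\lambda^{*-1}\varphi$ gives a Laurent series in $\bar w$ whose coefficient of $\bar w^{-(m+1)}$ is $-((T^*)^m\varphi,T_\lambda^{*-1}\varphi)$. By the identity (\ref{KFC}) this sum equals $1-E_g(\lambda,w)$, which for large $|w|$ has Laurent expansion $-\sum_{m\geq1}E_m(\lambda)/\bar w^m$. Matching coefficients yields $((T^*)^k\varphi,T_\lambda^{*-1}\varphi)=E_{k+1}(\lambda)$ for every $k\geq0$ and every $\lambda\in\mathbb{C}$. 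Summing $c_k$ times this identity over $k$ proves (\ref{Cauchystar}).

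The main technical obstacle is justifying the Cauchy-Pompeiu step when $E_g(\lambda,u)$ is only continuous in $u$ and the derivative identity for $\partial_u E_g$ is merely distributional. A routine mollification of $g$ reduces the computation to the smooth case, and the dominated convergence afforded by the compact support of $g$ together with the Lipschitz control of Theorem~\ref{main} near $u=\lambda$ permits passage to the limit.
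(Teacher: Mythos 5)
Your argument is correct, and the second half (Neumann series for $T_w^{*-1}\varphi$ at large $|w|$, the identity (\ref{KFC}), and matching of Laurent coefficients in $1/\bar w$ to get $(T^{*k}\varphi,T_\lambda^{*-1}\varphi)=E_{k+1}(\lambda)$) is in substance the same coefficient-matching the paper performs. Where you genuinely diverge is in evaluating the right side of (\ref{Cauchystar}): the paper simply quotes Theorem \ref{rep}, which asserts that for $\lambda\in\bold{L}_{g}$ the representation (\ref{keyidentity}) holds for \emph{all} $z$, and then reads off the moments $\frac{1}{\pi}\int u^{k}\frac{E(u,\lambda)}{\overline{u-\lambda}}g(u)\,da(u)$ by expanding $\frac{1}{u-z}$ at infinity and conjugating. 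You instead re-derive exactly those moments by hand: writing the integrand as $\partial_u\bigl[p(\bar u)E_g(\lambda,u)\bigr]$ via the conjugate of (\ref{dbarE}), applying Cauchy--Pompeiu on an annulus, killing the inner contour with the Lipschitz estimate of Theorem \ref{main} (transferred through the symmetry $E_g(\lambda,u)=\overline{E_g(u,\lambda)}$), and computing the outer contour from the $1/\bar u$-expansion of $E_g(\lambda,\cdot)$. This buys a self-contained proof that needs only the moment identities rather than the full pointwise Cauchy-transform representation; the cost is that the distributional-versus-classical issue you flag at the end is precisely the content that the paper's proof of Theorem \ref{rep} already absorbed (continuity of $S_w$ plus the removability of a distribution supported at a point), so your mollification step is not merely a formality --- you must check that $E_{g_\epsilon}(\lambda,\cdot)\to E_g(\lambda,\cdot)$ pointwise off the diagonal (which follows from $g_\epsilon\to g$ in $L^p$, $p>2$, against the $L^{q}_{loc}$, $q<2$, kernel), pass to the limit in $\epsilon$ at fixed inner radius $r$, and only then apply the Lipschitz bound for the un-mollified $E_g$ to send $r\to 0$. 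With that order of limits spelled out the argument closes, and it is a legitimate alternative to citing Theorem \ref{rep}.
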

\begin{proof} Since $\lambda\in\bold{L}_{g},$ then by Theorems \ref{KFC} and \ref{rep} for all $z$ \begin{equation} \label{twovariable}< T_{\lambda}^{*-1}\varphi, T_{z}^{*-1}\varphi>  = 1 - E(z,\lambda) = \frac{1}{\pi}\int_{\mathbb{C}}\frac{E(u,\lambda)}{\overline{u-\lambda}} g(u)\frac{da(u)}{u-z}.\end{equation} Equating powers of $z$ at infinity one obtains \begin{equation} < T_{\lambda}^{*-1}\varphi, T^{*k}\varphi>  =\frac{1}{\pi}\int_{\mathbb{C}}u^{k}\frac{E(u,\lambda)}{\overline{u-\lambda}} g(u)da(u),
\end{equation} for $k=0, 1,\cdots .$ The result follows by taking complex conjugates in this last identity.

\end{proof}

\begin{remark} The integral kernel \[ \mathcal{T} (\lambda, u) =\frac{E_{g}(\lambda, u)}{u-\lambda}g(u),\] appearing in Theorem \ref{CLstar} $a.e.\ gda$ satisfies $\vert \mathcal{T} (\lambda, u)\vert \leq \vert u-\lambda\vert^{-\sigma}$ near $\lambda,$ where $0<\sigma <1.$ In particular, has the advantage that  $\mathcal{T} (\lambda, \cdot)$ is in $L^2(\mathbb{C}).$

It is known that the closed span of $T_{\lambda}^{*-1}\varphi,\ \lambda\in\sigma(T),$ is $\mathcal{H}.$ Since the closure of $\bold{L}_{g}$ equals $\sigma (T),$ it follows that the closed span of $T_{\lambda}^{*-1}\varphi,\ \lambda\in\bold{L}_{g},$ also is $\mathcal{H}.$ \end{remark}
We conclude this subsection with a few examples of applications of Theorem \ref{CLstar}..
A test-function model for the operator $T^{*}$ was constructed in \cite{putdist} (see also \cite[p. 151, p.261]{putinarmartin} ) using the map \[\eta\in\mathcal{D}(\mathbb{C})\ \rightarrow\ U(\eta) = \frac{1}{\pi}\int\partial\eta (\lambda) T_{\lambda}^{*-1}\varphi da(\lambda ),\]  so that $U(\bar{z}\eta) = T^{*}U(\eta).$ This test function model is dual to the distributional model described in \cite{ClanceyJOT}, where the map $V:\mathcal{H} \rightarrow \mathcal{E}^{'}(\mathbb{C})$ defined by $V(f)=-\bar{\partial} <f, T_{\lambda}^{*-1}\varphi > $ was studied.  It is easily verified that  for $\eta$ a test function and $f\in\mathcal{H}$
\[ <U(\eta), f> = \frac{1}{\pi}<\eta, \overline{V(f)}>.\]

Note that using (\ref{twovariable}) for $w\in\bold{L}_{g}$ \begin{equation}\begin{gathered} < T_{w}^{*-1}\varphi, U(\eta ) > = \frac{1}{\pi}\int_{\mathbb{C}}\overline{\partial}\overline{\eta} < T_{w}^{*-1}\varphi, T_{\lambda}^{*-1}\varphi > da(\lambda) = \\ \frac{1}{\pi}\int_{\mathbb{C}}\bar{\partial} \bar{\eta}(\lambda)\left(\frac{1}{\pi}\int_{\mathbb{C}}\frac{E(u,w)}{\overline{u-w}} \frac{g(u)}{u-\lambda} da(u)\right) da(\lambda)=\\\frac{1}{\pi}\int_{\mathbb{C}}\ \bar{\eta}(\lambda)\frac{E(\lambda,w)}{\overline{\lambda-w}} g(\lambda)da(\lambda).
 \end{gathered} \end{equation} Equivalently, \begin{equation}  <U(\eta ), T_{w}^{*-1}\varphi > = \frac{1}{\pi}\int_{\mathbb{C}}\eta(\lambda)\frac{E(w,\lambda )}{\lambda-w} g(\lambda)da(\lambda).
 \end{equation}\

\begin{example} In some cases, when combined with Theorem \ref{CLstar}, this last equation allows one to to identify the vector  $U(\eta) .$ For example, if $\eta (u) = \bar{u}^{k}, \  k= 0, 1, 2, \cdots$ on the set where $g$ is non-zero, then $U(\eta ) = T^{*k}\varphi.$  It is noted that this result can also be obtained directly from the definition of $U(\eta).$ \end{example}
\begin{example} For simplicity, suppose the essential support $\bold{G}$ of the principal function $g$ is contained in the open unit disc $\mathbb{D}.$ Let \[ \Phi_{1} = \frac{1}{\pi}\int_{\mathbb{D}}T_{\lambda}^{*-1}\varphi da(\lambda ) \ \text{and}\  \Phi_{g} = \frac{1}{\pi}\int_{\mathbb{C}}T_{\lambda}^{*-1}\varphi g(\lambda) da(\lambda ).\] For $w\in \bold{L}_{g},$ using (\ref{twovariable}) one computes 
\[<T_{w}^{*-1}\varphi, \Phi_{1} +\Phi{g} > = <T_{w}^{*-1}\varphi, T\varphi>.\] This yields \[ T\varphi = \Phi_{1} + \Phi_{g} = \frac{1}{\pi}\int_{\mathbb{D}} (1+g) T_{\lambda}^{*-1}\varphi da(\lambda), \] which gives a concrete representation of $T\varphi$ in terms of the dense family $\lbrace T_{\lambda}^{*-1}\varphi : \lambda\in\sigma (T)\rbrace.$  As is often the case, the unilateral shift provides an illuminating version of this last identity.This last identity can be viewed as a realization of the formula for $T\varphi$ given in the test function model in \cite[p. 261]{putinarmartin}. That is \[ T\varphi = \frac{1}{\pi}\int\partial\left(\eta -\overline{\widehat{g\bar{\eta}}}\right) T_{\lambda}^{*-1}\varphi,\]  where the test function $\eta$ satisfies $n(\lambda) = \lambda$ on the support of $g.$ 
\end{example}
\subsection{Non-cyclic vectors}
Let $T$ be an operator with one-dimensional self-commutator $T^*T-TT^* = \varphi\otimes\varphi$ and $T_{\lambda}^{*-1}\varphi\ \lambda\in\mathbb{C}$ the corresponding global-local resolvent. Given a compactly supported planar measure $\mu,$ one can define the vector \begin{equation}\label{vector}\phi_{\mu} =\int_{\mathbb{C}}T_{\lambda}^{*-1}\varphi d\mu\end{equation} as a weak integral, that is, for $f\in\mathcal {H}$
 \[ <\phi_{\mu}, f > = \int_{\mathbb{C}}<T_{\lambda}^{*-1}\varphi, f > d\mu (\lambda). \] In  an extremely formal sense $\phi_{\mu} = -(\widehat{\overline{\mu}}(T))^{*}\varphi.$ For example, if $\mu = \delta_{w},$ with $\mu\notin\sigma (T),$ one has $-\widehat{\overline{\mu}}(\lambda) = \frac{1}{\pi (\lambda -w)} $ and $\phi_{\mu} = T_{w}^{*-1}\varphi.$ 
 
It follows from Theorem \ref{CL} that for $r$ a rational function with poles off the spectrum of $T$ we have \[ <r(T)\varphi, \phi_{\mu} > = -\int_{\mathbb{C}} r(u)\widehat{\overline{\mu}} (u) g(u) da(u).\] We remark on the connection between this last identity and results concerning rational approximation in \cite{Thomson} and more recently \cite{yang}. For $X$ a compact set in the plane, let $P(X)$ respectively, $R(X)$  be the closure in the space $C(X)$ of continuous function on $X$ of the polynomials, respectively, the rational functions with poles off $X.$ It was shown in \cite{Thomson} when $X$ is nowhere dense, then the closure of the module $\bar{z} P(X) + R(X)$  is $C(X)$ if and only if $R(X)=C(X).$ Here $z$ is the function $z(u)=u.$ This is in contrast to the result in \cite{trent} that establishes when X is a compact nowhere dense set, then the closure of $\bar{z}R(X)+R(X)$ is $C(X).$ 

We are interested here in the case where the characteristic function $\mathbbm{1}_{X}$ ``is" the principal function of an operator with one-dimensional self-commutator. Since the principal function of an operator with one-dimensional self commutator is only determined up to sets of measure zero, this has to be properly interpreted. If $g$ is the principal function associated with $T,$ then $\sigma (T)$ is the essential closure of the set $\lbrace u:g(u)\neq 0 \rbrace.$ Consequently, we only consider the class of  closed nowhere dense sets of positive measure that are essentially closed, i.e., equal their essential closure. For such a set $X$ there is a unique associated irreducible operator $T_{X}$ with one-dimensional self-commutator having principal function $\mathbbm{1}_{X}.$ Moreover, different such sets $X$ correspond to different operators $T_{X}$ and $\sigma (T_{X}) =X.$ It should also be noted that for a compact set $X$ the essential closure of $X$ is a closed subset of $X$ that differs from $X$ by a set of planar measure zero.  Moreover, if $R(X)\neq C(X),$ the same is true for its essential closure \cite{rubel}.

Based on the result of Thomson \cite{Thomson}, as noted in \cite{ClanceyIU}, one can establish the following:
\begin{prop} Suppose $T$ is an operator with one-dimensional self-commutator $T^*T-TT^* = \varphi\otimes\varphi$ associated with the principal function $\mathbbm{1}_{X},$ where $X$ is a compact  essentially closed nowhere dense set of positive measure. If the closure of $\bar{z} P(X) + R(X)$ is not equal to $ C(X), $ equivalently, the closure of $\bar{z} P(\sigma (T)) + R(\sigma (T))$ is not equal to $C(\sigma (T)),$ then the vector $\varphi$ is not cyclic for the operator $T.$ \end{prop}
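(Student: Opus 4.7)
The plan is to prove the contrapositive in effect: starting from the hypothesis that $\overline{\bar z P(X)+R(X)}\neq C(X)$, I will construct an explicit nonzero vector $\Psi\in\mathcal H$ orthogonal to every $p(T)\varphi$, which precludes cyclicity. By Hahn--Banach the hypothesis provides a nonzero complex Borel measure $\nu$ on $X$ satisfying $\int r\,d\nu=0$ for every $r\in R(X)$ and $\int\bar z\,p(z)\,d\nu(z)=0$ for every polynomial $p$; in particular $\widehat\nu\equiv 0$ on $\mathbb C\setminus X$ and $\int 1\,d\nu=0$. Form the weak integral
\[ \Psi \;=\; \phi_{\bar\nu} \;=\; \int_X T_\lambda^{*-1}\varphi\,d\bar\nu(\lambda) \;\in\; \mathcal H,\]
which converges because $\|T_\lambda^{*-1}\varphi\|\le 1$ and $|\nu|(X)<\infty$.

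The orthogonality $\langle p(T)\varphi,\Psi\rangle=0$ for every polynomial $p$ is the first substantive step. Combining the identity $\langle r(T)\varphi,\phi_\mu\rangle=-\int r(u)\widehat{\bar\mu}(u)g(u)\,da(u)$ (with $\mu=\bar\nu$ and $g=\mathbbm{1}_X$) with Fubini and the elementary equality $\int_X p(u)/(\lambda-u)\,da(u)=-\pi\widehat{p\mathbbm{1}_X}(\lambda)$ reduces the inner product to $\int_X \widehat{p\mathbbm{1}_X}(\lambda)\,d\nu(\lambda)$. The key lemma is that $\widehat{p\mathbbm{1}_X}|_X$ lies in $\bar z P(X)+R(X)$: with $\chi$ a smooth cutoff identically $1$ on a neighborhood of $X$ and compactly supported, one computes
\[ \bar\partial\bigl(\bar z\,p(z)\chi(z)+\widehat{p\chi}(z)\bigr)\;=\;\bar z\,p(z)\,\bar\partial\chi(z), \]
which is supported off $X$, so $\bar z p\chi+\widehat{p\chi}$ is holomorphic in a neighborhood of $X$ and restricts there to an element of $R(X)$; subtracting off $\widehat{p(\chi-\mathbbm{1}_X)}$, whose generating function is supported off $X$ and whose Cauchy transform is therefore holomorphic near $X$ (hence in $R(X)$ on $X$), identifies $\widehat{p\mathbbm{1}_X}|_X=-\bar z\,p(z)+h_p(z)$ with $h_p\in R(X)$. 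Since $\nu$ annihilates $\bar z P(X)+R(X)$, the integral vanishes and $\Psi\perp p(T)\varphi$.

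The main obstacle is showing $\Psi\neq 0$. Observe that the map $\mu\mapsto\phi_\mu$ from $\mathcal M(X)$ to $\mathcal H$ has dense range by the density in $\mathcal H$ of $\{T_\lambda^{*-1}\varphi:\lambda\in\mathbf L_g\}$, and its kernel is the annihilator of the image subspace $\mathcal I=\{\lambda\mapsto\langle T_\lambda^{*-1}\varphi,f\rangle:f\in\mathcal H\}\subset C(X)$. Via Theorem \ref{KFC}, $\mathcal I$ contains the functions $1-E(w,\lambda)$ as $w$ ranges over $X$, and via Theorem \ref{CLstar} it contains the continuous functions obtained from $f=T^{*k}\varphi$, which after the change of variables carried out in the preceding calculation produce Cauchy-type kernels involving $\bar u^k E(\lambda,u)$; these functions lie outside the conjugated space $\overline{z\bar P(X)+\bar R(X)}$ in general, so the annihilator $\mathcal I^\perp$ is strictly contained in the set of measures annihilating $\bar z P(X)+R(X)$, and a suitable $\nu$ in the latter set but not the former gives $\Psi\neq 0$. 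To make this precise, one expands $E(\mu,u)$ in $1/\mu$ at infinity, applies it to the putative identity $\int E(\mu,u)/(u-\mu)\,d\bar\nu(\mu)\equiv 0$ arising from $\Psi=0$, and uses the density of $\bar z R(X)+R(X)$ in $C(X)$ from \cite{trent} together with $\widehat\nu|_{\mathbb C\setminus X}=0$ to force $\nu\equiv 0$, the desired contradiction.

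With $\Psi\neq 0$ in hand, the closed polynomial cyclic subspace $\overline{\mathrm{span}}\{p(T)\varphi\}$ is contained in $\Psi^{\perp}\subsetneq\mathcal H$, so $\varphi$ is not cyclic for $T$.
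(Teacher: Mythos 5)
Your overall strategy is the paper's own: from a nonzero measure $\nu$ annihilating $\bar z P(X)+R(X)$ you build the weak integral $\phi_{\bar\nu}$, show it is orthogonal to every $p(T)\varphi$, and show it is nonzero using the density of $\bar z R(X)+R(X)$ in $C(X)$ from \cite{trent}. Two of your steps, however, do not hold up as written. First, in the orthogonality argument you assert that $p(\chi-\mathbbm{1}_X)$ is ``supported off $X$,'' so that its Cauchy transform is holomorphic near $X$. This fails precisely because $X$ is nowhere dense: every neighborhood of every point of $X$ contains points where $\chi=1$ and $\mathbbm{1}_X=0$, so the closed support of $p(\chi-\mathbbm{1}_X)$ contains $X$, and $\widehat{p(\chi-\mathbbm{1}_X)}$ is not holomorphic in any neighborhood of $X$ (nor is it clear its restriction lies in $R(X)$). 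What is true, and all that is needed, is $\int_X \widehat{p(\chi-\mathbbm{1}_X)}\,d\nu=0$; this follows by Fubini from $\hat\nu=0$ a.e.\ off $X$ (a consequence of $\nu\perp R(X)$) together with $\chi-\mathbbm{1}_X=0$ on $X$. With that repair the orthogonality step is sound.

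Second, and more seriously, the non-vanishing step is not yet a proof. The middle of that paragraph argues that $\mathcal I^{\perp}$ is strictly contained in the annihilator of $\bar z P(X)+R(X)$ and that ``a suitable $\nu$'' outside $\mathcal I^{\perp}$ gives $\Psi\neq0$; but the argument must work for the $\nu$ you fixed at the outset (equivalently, for every nonzero annihilating $\nu$), and you give no way to select a special one. Your closing sentence pivots to the correct contradiction ($\Psi=0\Rightarrow\nu=0$) but routes it through an expansion of $E(\mu,u)$ at infinity whose execution is not supplied and which is not needed. The direct route, which is the paper's, is to apply the pairing identity you already derived, $\langle r(T)\varphi,\phi_{\bar\nu}\rangle=\pm\int_X\bar u\,r(u)\,d\nu(u)$, to \emph{rational} $r$ with poles off $X$ rather than only to polynomials (this rests on Theorem \ref{CL}, and your cutoff computation goes through verbatim for such $r$ since $r$ is holomorphic near $X$). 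If $\Psi=0$, then $\nu$ annihilates $\bar z R(X)$ as well as $R(X)$, hence the dense subspace $\bar z R(X)+R(X)$ of $C(X)$, forcing $\nu=0$, a contradiction. No appeal to $E$, to $\mathcal I$, or to the density of $\{T_\lambda^{*-1}\varphi\}$ is required.
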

\begin{proof} Let $\mu$ be a non-zero measure on $X$ that annihilates $\bar{z} P(X)+R(X).$ Let $\phi_{\bar{\mu}}$ be given by (\ref{vector}) with $\bar{\mu}$ replacing $\mu.$ Then for $r$ a rational function with poles off $X$ the last equation results in the identity
 \begin{equation} <r(T)\varphi, \phi_{\bar{\mu}} > = -\int_{X} r(u)\hat{\mu} (u) da(u)= \int_{X}\bar{u}r(u)d\mu (u).\end{equation}
By the result of \cite{trent} the closure of $\bar{z}R(X) + R(X)$ is $C(X)$ and therefore for some $r\neq 0$ the right side of this last equation is non-zero. This implies $\phi_{\bar{\mu}}$ is non-zero. Since $\int_{X}\bar{z}pd\mu = 0$ for all polynomials $p$ it follows that $\varphi$ is not (polynomially) cyclic for the operator $T.$\end{proof}
\begin{remark} In the context of the last result, a natural example to consider is that of a Swiss cheese $X,$ that is, where $X$ is a closed nowhere dense set of positive planar measure obtained by removing a collection of open discs $D(w_{n}, r_{n}),\ n=1,2\cdots $ with disjoint closures from the closed unit disc. If one assumes $\Sigma_{1}^{\infty} r_{n} <\infty,$ then $R(X)\neq C(X).$ Let $T$ be the irreducible operator with one-dimensional self-commutator associated with the principal function $\mathbbm{1}_{X},$  where $X$ is a Swiss cheese with $\Sigma_{1}^{\infty} r_{n} <\infty.$ It is known that the $\varphi$ is rationally cyclic for both $T$ and $T^{*},$ see \cite{ClanceyJOT}. The result above shows $\varphi$ is not (polynomially) cyclic for $T.$

In a fundamental paper, Brown \cite{brown} established the existence of invariant subspaces for a hyponormal operators $H$ whenever there is a closed disc $D$ such that $R(D\cap\sigma (H))\neq C(D\cap\sigma (H)).$  Thus the result in the last proposition does not advance the theory of invariant subspaces. However, the fact that $\varphi$ is not cyclic for $T$ is new, albeit depending on the deep result of Thomson \cite{Thomson}. 

The result in the last proposition can be extended to the case, again assuming $X$ is nowhere dense, where for some closed disc $D$ the closure of $\bar{z} P(X\cap D) + R(X\cap D)$ is not equal to $ C(X\cap D).$ To see this, note if $\mu$ is a non-zero measure on $X\cap D$ annihilating $\bar{z} P(X\cap D) + R(X\cap D),$  then  one has \[<r(T)\varphi, \phi_{\bar{\mu}} > =  \int_{X\cap D}\bar{u}r(u)d\mu (u)\]  for $r$ a rational function with poles off $X$ and the last integral will be zero for $r = p$ a polynomial. In order to see that $\phi_{\bar{\mu}}$ is not zero, note that the last integral will be non-zero for some rational function $r$ with poles off $X\cap D.$ If one does a partial fractions decomposition of $r$ the part of this decomposition with poles in $X\backslash(X\cap D)$ can be approximated by a polynomial on $X\cap D.$ In this way, $r$ can be replaced by a rational function $r_{0}$ with poles off $X$ where $\int_{X\cap D}\bar{u}r_{0}(u)d\mu (u)\neq 0.$

It would be interesting to see if the results in \cite{trent1}, \cite{Thomson} and \cite{yang} appropriately extend so that the above techniques can be used to establish that the vector $\varphi$ is not cyclic under the conditions that $\sigma (T)$ is nowhere dense and there is a closed disc $D$ such that $R(D\cap\sigma (H))\neq C(D\cap\sigma (H)).$
\end{remark}

\subsection{Some definite integral values computed using $E$ and the unilateral shift}
In the case, where the operator  $T$ is the unilateral shift $Uf(z)=zf(z)$ acting on the Hardy space $H^{2}$ consisting of analytic functions $f$ on the open unit disc $\bold{D}$ with norm \[ \| f\| = \left(\lim_{r\to 1}\frac{1}{2\pi}\int_{0}^{2\pi} |f(re^{i\theta}|^2d\theta\right)^{\frac{1}{2}}, \] the principal function $g_{T}$ is the characteristic function of the disc $\bold{D}.$ It is easy to verify that for $|\lambda |<1, $
\begin{equation}\label{inside}U_{\lambda}^{*-1} \bold{1} (z) =\frac{z-\lambda}{1-\bar{\lambda}z} \end{equation} and for $|\lambda |\geq 1,$ \begin{equation}\label{outside}U_{\lambda}^{*-1} \bold{1} (z) = -\frac{\bold{1}}{\bar{\lambda}}, \end{equation} where we are using the notation $\bold{1}$ for the constant function $\bold{1}(z) = 1$ that appears in the self-commutator $U^{*}U - UU^{*} = \bold{1}\otimes\bold{1}.$ A straightforward computation can be used to verify \begin{equation}\label{shift}1- (U_{w}^{*-1}\bold{1},U_{\lambda}^{*-1}\bold{1})= E_{\bold{D}}(\lambda, w ) =  \exp-\frac{1}{\pi}{\int_{\bold{D}} \frac{u-w}{u-\lambda }\frac{1}{\vert u-w\vert^2}} da(u ),\end{equation} where $E_{\bold{D}}$ is given by (\ref{unitdisc}). 

Using this last formula one can directly verify the integral formulas (\ref{real}) and (\ref{Imaginary}). 
For example, consider the formula (\ref{real}) where $\alpha <1.$ The map $u = (\lambda -c_{\alpha})z +c_{\alpha}$ maps $\bold{D}$ onto $D_{\lambda, \alpha}$ sending $-1$ to $\lambda_{\alpha} = w +\alpha(\lambda - w)$ and $1$ to $\lambda.$ This change of variables results in the equality \begin{equation} \label{change}-\frac{1}{\pi}\int_{D_{\lambda,\alpha}} \frac{u-w}{u-\lambda }\frac{1}{\vert u-w\vert^2} da(u ) =-\frac{1}{\pi}\int_{\bold{D}}\frac{z - s_{\alpha}}{z-1}\frac{1}{|z-s_{\alpha}|^{2}} da(z),
\end{equation} where $s_{\alpha} = \frac{1+\alpha}{\alpha -1}$ is the image of $w$ under the inverse map $z=z(u).$ The right side of this last equation is recognized as the exponent in the following special case of equation (\ref{shift}):
\begin{equation} 1- (U_{s_{\alpha}}^{*-1}\bold{1},U_{1}^{*-1}\bold{1})= E_{\bold{D}}(s_{\alpha}, 1 ) =  \exp-\frac{1}{\pi}{\int_{\bold{D}} \frac{u-s_{\alpha}}{u-1 }\frac{1}{\vert u-s_{\alpha}\vert^2}} da(u ).
\end{equation}  Depending on whether $s_{\alpha}$ is inside the open unit disc ($\alpha <0$) or outside the open unit disc ($0\leq \alpha <1$) one uses (\ref{inside}) or (\ref{outside}) to compute the left side of this last equality. For example, in the case where $s_{\alpha}$ is inside the open unit disc \[1- (U_{s_{\alpha}}^{*-1}\bold{1},U_{1}^{*-1}\bold{1}) = \frac{2}{1+|\alpha |}.\] Combining this last identity with (\ref{change}) equation (\ref{real}) follows. Similar arguments using the unilateral shift can be used to obtain the other instances of (\ref{real}) and (\ref{Imaginary}).

\bibliographystyle{plain}
\bibliography{Ework}

Department of Mathematics

University of Georgia

Athens, GA 

email: kclancey@uga.edu

\end{document}